\documentclass[11pt]{article}
\usepackage[margin=1in]{geometry}
\usepackage[round]{natbib}
\usepackage{amsmath,amssymb,amsthm,bm,booktabs,graphicx,enumitem}
\usepackage[colorlinks,citecolor=blue,urlcolor=blue,linkcolor=blue]{hyperref}
\theoremstyle{plain}
\newtheorem{theorem}{Theorem}\newtheorem{proposition}{Proposition}
\newtheorem{corollary}{Corollary}
\theoremstyle{definition}
\newtheorem{remark}{Remark}
\newcommand{\E}{\mathbb{E}}\newcommand{\R}{\mathbb{R}}\newcommand{\N}{\mathcal{N}}
\newcommand{\Pf}{\mathbb{P}}
\newcommand{\Sig}{\Sigma^{\star}}
\newcommand{\D}{\mathcal{D}}
\title{When Is a Graph a Covariance?\\[2pt] Bayesian Residual Propagation under Covariance Uncertainty}
\author{Robert Richardson\\
Department of Statistics, Brigham Young University\\
\texttt{richardson@stat.byu.edu}}
\date{}

\begin{document}
\maketitle
\begin{abstract}
Observing a prediction error at one node can help correct predictions elsewhere, but the benefit depends on the residual dependence between nodes. A graph suggests where that dependence might occur, yet does not establish its sign or strength. We develop a Bayesian model of residual covariance to determine how revealed errors should update a fixed feature-based predictor. For a fixed set of revealed nodes, we derive an exact identity for the change in expected squared error under linear residual propagation. The identity characterizes the optimal linear update and expresses the excess error of any other update as an exact quadratic. Under covariance uncertainty, the Bayes-optimal fixed linear update uses the posterior mean covariance. We average over four positive-semidefinite covariance families representing no dependence, positive dependence, negative dependence, and dependence whose sign can alternate with graph distance. Across eight graph benchmarks, four show posterior support within the distance-profile family for negative covariance between neighbors and alternating signs with distance. Among the evaluated methods, ours achieves the smallest worst-case recall gap to the best method on each graph. The method improves squared error over residual propagation on five of eight graphs, but performs substantially worse on one dense heterophilous graph. In these experiments, selecting the highest-posterior family performs similarly to model averaging.

\end{abstract}

\section{Introduction}

We consider prediction on a graph when a feature-based predictor is available and only some node labels can be observed. For each query, the task is to predict which nodes are relevant. Each observed label reveals a prediction error that may help correct predictions at other nodes. An error at one node may call for an adjustment in the same direction elsewhere, an adjustment in the opposite direction, or no adjustment. The graph alone does not tell us which response is appropriate or how large the adjustment should be. We use the graph to construct candidate models of residual covariance, allowing positive, negative, and sign-changing dependence. We learn a posterior over these models from previously labelled queries. For a new query, the resulting covariance estimate guides both the selection of nodes for labelling and the updates to predictions after their labels are observed.

\paragraph{Contributions.}  First, for centred residuals and a fixed set of revealed nodes, we derive an exact identity for the reduction in expected squared error under linear propagation. The identity gives the optimal linear update and expresses the excess error of any other update as an exact quadratic. For a single revealed node, any nonzero adjustment increases expected error when the true residual covariance with the target node is zero. A positive propagation coefficient also increases expected squared error when that covariance is
negative.

Second, we derive a Bayesian rule for propagation when the residual covariance is unknown. Under expected squared-error loss, the optimal linear rule with coefficients fixed before observing the new labels uses the posterior mean residual covariance. For a single reveal, the adjustment coefficient is zero exactly when the posterior mean cross-covariance is zero, so no separate threshold is needed to turn propagation off.

Third, we construct four positive-semidefinite covariance families representing no residual dependence, positive dependence, negative dependence, and dependence whose sign can alternate with graph distance. Proposition 1 shows that the standard Laplacian covariance model considered here has nonnegative covariance entries and nonnegative propagation coefficients. It therefore cannot represent negative residual covariance or covariance that alternates in sign with graph distance. We evaluate the resulting propagation decisions on eight graph benchmarks with varying levels of homophily.

\subsection{Relation to prior work}

Our work builds on residual propagation and methods for learning on heterophilous graphs.

\paragraph{Residual propagation.} Previous work has shown that residuals from feature-based predictors can retain graph structure and that propagating these residuals can improve predictions. \citet{jia2020} model GNN regression residuals with a graph-structured Gaussian distribution and estimate a signed dependence parameter by marginal likelihood. Their full model therefore allows both positive and negative residual correlation. Their lightweight LP-GNN variant and the correction step of \citet{huang2021} assume positive residual dependence. Our experiments include Laplacian propagation and the correction step of \citet{huang2021} as baselines. They do not include Jia and Benson's learned signed model. Our method instead places a posterior over several covariance families, including a null model and a distance-based family that permits uniformly negative or alternating covariance across positive graph distances. We hold the feature-based predictor fixed and use the inferred covariance to update its predictions.

\paragraph{Heterophily and selective propagation.} Methods that encourage neighbouring nodes to agree can perform poorly on heterophilous graphs, where connected nodes often have different labels \citep{pei2020,zhu2020,lim2021}. \citet{choi2025} address uncertainty about edges with an adaptive method that can block propagation. Other methods allow propagation to adapt to the graph: FAGCN combines low- and high-frequency components using weights that can be negative \citep{bo2021fagcn}, while GPR-GNN learns signed polynomial coefficients \citep{chien2021gprgnn}. These two methods learn their propagation weights as part of training the predictor. We hold the feature-based predictor fixed and use residuals from previously labelled queries to infer which covariance structures are supported by the data. The resulting posterior determines how newly observed errors update the remaining predictions. Label heterophily alone does not determine the residual dependence that remains after fitting the predictor.

\paragraph{Our distinction.} We focus on uncertainty about graph-based residual covariance and its consequences for propagation. The update has the standard Bayes-linear form \citep{goldstein2007}. Our risk identity makes explicit how the residual covariance determines its benefit and the cost of using a different linear update. \citet{xu2010} interpret graph smoothing as kriging and estimate graph-based correlations from observed responses. \citet{borovitskiy2021} develop Laplacian-based covariance models on graphs. Learning kernels from data also has a substantial literature, including kernel alignment and multiple kernel learning \citep{cristianini2001,gonen2011}. We target the covariance of residuals from a cross-fitted predictor \citep{chernozhukov2018} and place a posterior over covariance families that allow different signs and distance patterns.

\section{The Propagation Identity}
\label{sec:theory}

\paragraph{Setting.}
For a given query, a pool of $n$ items has unknown binary relevance
$g\in\{0,1\}^n$. A fitted predictor provides scores $m\in[0,1]^n$,
and the residual vector $r=g-m$ records its prediction errors.
We observe the relevance of $B$ items and use their residuals to
update predictions for the remaining items. We evaluate both
prediction error and ranking quality. The covariance we seek to
estimate is
\begin{equation}
   \Sig
   = \operatorname{Cov}(r)
   = \E[rr^\top]-\E[r]\E[r]^\top .
   \label{eq:sigdef}
\end{equation}

Cross-fitting prevents the predictor from being fitted on the
labels used to evaluate it, but does not guarantee zero-mean
residuals. In our data, pooled residual means reach $0.031$.
On one graph, centring removes $60\%$ of the raw edge second
moment. Much of the apparent agreement between neighbouring
residuals can therefore come from prediction bias. We estimate
a pooled bias from training queries and adjust the scores for
each new query without using its labels. This correction does
not guarantee that every residual component has mean zero.
The theorem below assumes centred residuals. Without that assumption the same algebra applies with the
residual second moment $\E[rr^\top]$ in place of the covariance $\Sig$, though an affine rule would then do strictly
better, which is why we correct the bias first and keep the linear form.

\begin{theorem}[Propagation identity, multiple reveals]
\label{thm:delta}
Let $r$ satisfy $\E[r]=0$, with covariance
$\Sig=\E[rr^\top]$. Let $S$ be a set of nodes fixed before
observing the residuals, and let $U$ be its complement.
Suppose the observed residuals are
$z_S=r_S+\varepsilon$, where $\varepsilon$ is independent
of $r$, $\E[\varepsilon]=0$, and
$\operatorname{Cov}(\varepsilon)=\sigma^2I$.
Write $V_S=\Sig_{SS}+\sigma^2I\succ0$.

For any fixed matrix $C$, consider the linear update
$\hat r_U=Cz_S$. Its reduction in total expected squared
prediction error is
\begin{equation}
\begin{split}
   \Delta(C;S)
   &:= \E\bigl[\|r_U\|_2^2\bigr]
       -\E\bigl[\|r_U-Cz_S\|_2^2\bigr]\\
   &=2\operatorname{tr}\!\bigl(C\Sig_{SU}\bigr)
     -\operatorname{tr}\!\bigl(CV_SC^\top\bigr).
\end{split}
   \label{eq:delta}
\end{equation}
The unique maximiser is $C^\star=\Sig_{US}V_S^{-1}$.
For every $C$,
\begin{equation}
\begin{split}
   \Delta(C^\star;S)-\Delta(C;S)
   &=\operatorname{tr}\!\bigl[
      (C-C^\star)V_S(C-C^\star)^\top
      \bigr]\\
   &=\bigl\|(C-C^\star)V_S^{1/2}\bigr\|_F^2 .
\end{split}
   \label{eq:quad}
\end{equation}
\end{theorem}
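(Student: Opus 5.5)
The plan is a direct second-moment computation followed by completing the square in $C$. First I would expand the squared error inside the expectation,
\[
\|r_U-Cz_S\|_2^2=\|r_U\|_2^2-2\,r_U^\top C z_S+z_S^\top C^\top C z_S,
\]
so that
\[
\Delta(C;S)=2\,\E[r_U^\top C z_S]-\E[z_S^\top C^\top C z_S].
\]
Because $C$ is fixed before the residuals are observed, each term is a trace of $C$ times a second-moment matrix: $\E[r_U^\top C z_S]=\operatorname{tr}(C\,\E[z_S r_U^\top])$ and $\E[z_S^\top C^\top C z_S]=\operatorname{tr}(C\,\E[z_Sz_S^\top]C^\top)$. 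It remains to identify these two moment matrices.

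Next I would use the noise model. Since $\varepsilon$ is independent of $r$ and $\E[\varepsilon]=0$, we have $\E[\varepsilon r_U^\top]=\E[\varepsilon]\E[r_U]^\top=0$. Hence $\E[z_Sr_U^\top]=\E[r_Sr_U^\top]=\Sig_{SU}$, where $\E[r]=0$ is used so that the second moment equals the covariance. In the same way the cross terms vanish in
\[
\E[z_Sz_S^\top]=\Sig_{SS}+\E[\varepsilon\varepsilon^\top]=\Sig_{SS}+\sigma^2I=V_S.
\]
Substituting these gives \eqref{eq:delta}. One point of bookkeeping needs care: $C$ is $|U|\times|S|$, so $C\Sig_{SU}$ is square and its trace is the correct pairing. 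We also have $\operatorname{tr}(C\Sig_{SU})=\operatorname{tr}(\Sig_{US}C^\top)$ by symmetry of $\Sig$.

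For the maximiser I would complete the square rather than differentiate. Set $C^\star=\Sig_{US}V_S^{-1}$, which is well defined since $V_S\succ0$. Then
\[
C^\star V_S=\Sig_{US}, \qquad\text{so}\qquad \operatorname{tr}(C\Sig_{SU})=\operatorname{tr}(C V_S C^{\star\top}),
\]
using the symmetry of $V_S$. Expanding $\operatorname{tr}[(C-C^\star)V_S(C-C^\star)^\top]$ gives
\[
\operatorname{tr}(CV_SC^\top)-2\operatorname{tr}(CV_SC^{\star\top})+\operatorname{tr}(C^\star V_SC^{\star\top})
=-\Delta(C;S)+\operatorname{tr}(C^\star V_SC^{\star\top}).
\]
Applying the same identity at $C=C^\star$ shows $\Delta(C^\star;S)=\operatorname{tr}(C^\star V_S C^{\star\top})$. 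Subtracting yields the first line of \eqref{eq:quad}. The Frobenius form follows by writing $V_S=V_S^{1/2}V_S^{1/2}$ with the symmetric positive-definite square root.

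Uniqueness then follows from positive definiteness. The quantity $\|(C-C^\star)V_S^{1/2}\|_F^2$ is zero only when $(C-C^\star)V_S^{1/2}=0$, and since $V_S^{1/2}$ is invertible this forces $C=C^\star$. I do not expect a genuine obstacle. The only step needing care is the cross-moment computation, where independence together with the zero means of $\varepsilon$ and $r$ must be invoked explicitly to kill the noise cross terms and to identify second moments with $\Sig$. The remaining care is keeping the transposes consistent, $\Sig_{SU}$ versus $\Sig_{US}$, across the trace manipulations.
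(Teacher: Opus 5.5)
Your proposal is correct and follows essentially the same route as the paper: expand the square, write the cross and quadratic terms as traces against $\E[z_Sr_U^\top]=\Sig_{SU}$ and $\E[z_Sz_S^\top]=V_S$, then complete the square using $\Sig_{SU}=V_SC^{\star\top}$. The only difference is in the uniqueness step. You read it directly off the excess identity using invertibility of $V_S^{1/2}$, whereas the paper argues separately from strict concavity and the stationary-point condition $2\Sig_{US}-2CV_S=0$; both arguments are valid.
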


The proof expands the squared errors and completes the square.
It is given in the supplement. Equation~\eqref{eq:quad} gives
the exact excess error from using $C$ instead of the optimal
linear update. Under a joint Gaussian model for the residuals
and observation noise, $C^\star z_S$ is the posterior mean of
$r_U$. The identity itself does not require Gaussianity.
It shows which linear update to use when the covariance is
known. The statistical task is to estimate that covariance.

\begin{corollary}[Single reveal]
\label{cor:scalar}
For a single revealed node $a$ and a single target node $i$,
write $c=C_{ia}$ and $V=\Sig_{aa}+\sigma^2>0$.
The reduction in expected squared error is
\[
   \Delta_i(c)=2c\Sig_{ia}-Vc^2.
\]
It is maximised at $c^\star=\Sig_{ia}/V$, and
\[
   \Delta_i(c^\star)-\Delta_i(c)
   =V(c-c^\star)^2.
\]
\end{corollary}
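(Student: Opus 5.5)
The corollary is the special case of Theorem~\ref{thm:delta} with $S=\{a\}$ and $U$ restricted to the single target $i$. I will specialise the identity~\eqref{eq:delta} rather than redo the expansion. Note that $\Delta(C;S)$ is additive over the rows of $C$. Since $\|r_U-Cz_S\|_2^2=\sum_{i\in U}(r_i-C_{i\cdot}z_S)^2$, both traces in~\eqref{eq:delta} split row by row. The $i$-th row contributes $2C_{i\cdot}\Sig_{Si}-C_{i\cdot}V_SC_{i\cdot}^\top$. This follows from $\operatorname{tr}(C\Sig_{SU})=\sum_i C_{i\cdot}\Sig_{Si}$ and $\operatorname{tr}(CV_SC^\top)=\sum_i C_{i\cdot}V_SC_{i\cdot}^\top$. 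We may therefore define $\Delta_i$ as the per-target reduction $\E[r_i^2]-\E[(r_i-cz_a)^2]$ and read it off directly.

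With $|S|=1$ the objects become scalars: $C_{i\cdot}=c$, $\Sig_{Si}=\Sig_{ai}=\Sig_{ia}$ by symmetry of the covariance, and $V_S=\Sig_{aa}+\sigma^2=V$. Substituting gives $\Delta_i(c)=2c\Sig_{ia}-Vc^2$. As a sanity check, expand $\E[(r_i-cz_a)^2]=\E[r_i^2]-2c\,\E[r_iz_a]+c^2\E[z_a^2]$. Independence of $\varepsilon$ from $r$ and $\E[\varepsilon]=0$ give $\E[r_iz_a]=\Sig_{ia}$ and $\E[z_a^2]=\Sig_{aa}+\sigma^2$.

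The maximiser $c^\star=\Sig_{ia}/V$ is the $(i,a)$ entry of $C^\star=\Sig_{US}V_S^{-1}$. One can also see it directly by differentiating the concave quadratic, which is strictly concave since $V>0$. For the excess identity, complete the square: $2c\Sig_{ia}-Vc^2=Vc^{\star2}-V(c-c^\star)^2$. Hence $\Delta_i(c^\star)-\Delta_i(c)=V(c-c^\star)^2$, which matches the scalar case of~\eqref{eq:quad}.

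There is no real obstacle. The only point needing care is justifying that a single target $i$ may be isolated from $U$. This holds either by taking $U=\{i\}$ in the theorem, which is allowed because the theorem holds for any fixed $C$ and the proof does not use that $U$ is the full complement, or by the row-wise decomposition above.
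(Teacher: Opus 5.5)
Your proposal is correct and matches the paper's proof, which likewise specialises Theorem~\ref{thm:delta} to $S=\{a\}$, $U=\{i\}$ and reads off the scalar identity, maximiser and excess. Your remark on isolating a single target, via the row-wise split of the traces or because the proof never uses that $U$ is the full complement, makes explicit a step the paper leaves implicit.
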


\begin{corollary}[Propagation with zero residual covariance]
\label{cor:hurt}
For a single revealed node $a$ and target node $i$, suppose
$\Sig_{ia}=0$ and $V=\Sig_{aa}+\sigma^2>0$.
Then every nonzero propagation coefficient $c$ gives
\[
   \Delta_i(c)=-Vc^2<0.
\]
Thus, any nonzero linear adjustment increases expected squared
error when the true residual cross-covariance is zero.
\end{corollary}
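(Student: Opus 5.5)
This is a direct specialisation of Corollary~\ref{cor:scalar}, so the plan is to read it off that formula rather than redo any expectation calculation. Corollary~\ref{cor:scalar}, which itself is the $1\times1$ case of Theorem~\ref{thm:delta} with $S=\{a\}$, $U\ni i$ and $C$ a row containing $c$ in the $a$ position, gives
\[
   \Delta_i(c)=2c\Sig_{ia}-Vc^2, \qquad V=\Sig_{aa}+\sigma^2>0 .
\]
Taking the trace identity \eqref{eq:delta} coordinatewise, with only the target $i$ counted, gives exactly this scalar expression. The cross term comes from $\E[r_i z_a]=\Sig_{ia}$, because $\varepsilon$ is independent of $r$ and has mean zero. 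The quadratic term comes from $\E[z_a^2]=V$.

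First, I would substitute the hypothesis $\Sig_{ia}=0$. The linear term vanishes and leaves $\Delta_i(c)=-Vc^2$. Second, I would use $V>0$, which holds by assumption or from $\Sig_{aa}\ge0$ and $\sigma^2\ge0$ with $V_S\succ0$. For every $c\neq0$ we have $c^2>0$, so $-Vc^2<0$.

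The interpretive sentence then follows from the definition of $\Delta_i$. A negative reduction means $\E[(r_i-cz_a)^2]>\E[r_i^2]$, so the adjustment strictly increases expected squared error at node~$i$. Equivalently, by Corollary~\ref{cor:scalar} the optimum is $c^\star=\Sig_{ia}/V=0$, and the excess $V(c-0)^2$ is strictly positive whenever $c\ne0$.

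The only step needing care is the reduction from the multi-target trace to a single target $i$. If one wants to avoid relying on Corollary~\ref{cor:scalar}, one can expand $\E[(r_i-cz_a)^2]=\E[r_i^2]-2c\,\E[r_iz_a]+c^2\E[z_a^2]$ directly, which needs only centring and the independence and mean-zero properties of $\varepsilon$. Beyond that there is no real obstacle.
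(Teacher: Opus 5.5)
Your proposal is correct and matches the paper's argument: the supplement obtains this as the $S=\{a\}$, $U=\{i\}$ specialisation of Theorem~\ref{thm:delta}, giving $\Delta_i(c)=2c\Sig_{ia}-Vc^2$, and then sets $\Sig_{ia}=0$ and uses $V>0$. Your row-by-row reduction of the trace, or the direct scalar expansion, is a harmless extra justification for isolating a single target.
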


If $\Sig_{ia}<0$ and $c>0$, both terms in
$\Delta_i(c)=2c\Sig_{ia}-Vc^2$ are negative, so any positive
propagation coefficient increases expected squared error. The graph does not appear in \eqref{eq:delta}. It supplies
candidate models for the sign, magnitude, and pattern of
residual covariance. Corollary~\ref{cor:hurt} describes one
cost of an incorrect covariance assumption. If the model
induces a nonzero adjustment between nodes whose residual
covariance is zero, that adjustment increases expected error.

\paragraph{Fixed and adaptive reveals.}
Theorem~\ref{thm:delta} assumes that $S$ is fixed in advance.
Its derivation uses moments such as
$\E[z_Sr_U^\top]=\Sig_{SU}$. When previously observed outcomes
determine which nodes are revealed next, the selected set
depends on the residuals. The unconditional covariance blocks
need not describe the moments under that selection.
Our experiments select nodes adaptively
(\S\ref{sec:exp}), so the theorem does not directly establish
optimality for the experimental procedure.

\paragraph{Squared error and ranking.}
The identity concerns expected squared error. It does not
directly establish an improvement in ranking or recall.
The supplement bounds pairwise disagreement with the oracle
linear ranking using the propagation regret
$\mathcal R(C)=\Delta(C^\star;S)-\Delta(C;S)$
and the oracle score margins. It gives both Gaussian and
distribution-free bounds. The bound is loose in simulation,
at $5$ to $12$ times the realised misordering count, and
provides no recall guarantee.

\paragraph{Observation noise and regularisation.}
In Theorem~\ref{thm:delta}, $\sigma^2$ is the variance of
observation noise. Our experiments reveal exact labels, so
their observation-noise variance is zero. We nevertheless
add a nugget of $0.05$ when computing the update to regularise
the matrix inversion and moderate the adjustment.
For a fixed reveal set and centred residuals, the identity
can evaluate this regularised coefficient matrix using the
true noise variance of zero. It does not establish that
the regularised update or the chosen nugget is optimal.

\section{Limits of Laplacian Covariance Models}
\label{sec:gmrf}

\begin{proposition}[Sign rigidity]
\label{prop:rigid}
Let $L$ be the combinatorial Laplacian of an undirected graph
with nonnegative edge weights. For $\lambda>0$, define
$K=(I+\lambda L)^{-1}$. Let $S$ be the revealed set and
$U$ its complement. The linear update based on $K$ for
exact observations has coefficient matrix
\[
   C(K)=K_{US}K_{SS}^{-1}.
\]
Every entry $c_{ij}$ of $C(K)$ is nonnegative.

For a single reveal $S=\{j\}$ and a target node $i$, if
$c_{ij}>0$ and $\Sig_{ij}<0$, then
Corollary~\ref{cor:scalar} gives $\Delta_i(c_{ij})<0$.
Thus, for every $\lambda>0$, a nonzero update of this form
increases expected squared error when the true residual
covariance between the two nodes is negative.
\end{proposition}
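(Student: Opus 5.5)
The plan is to show that $K_{US}K_{SS}^{-1}$ is entrywise nonnegative using the M-matrix structure of $A:=I+\lambda L$, and then read off the sign consequence from Corollary~\ref{cor:scalar}.

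\textbf{Structure of $A$.} The off-diagonal entries of $A$ are $-\lambda w_{ij}\le 0$. It is strictly diagonally dominant, since $A_{ii}=1+\lambda\sum_j w_{ij}$ exceeds the sum of the off-diagonal magnitudes by $1$. Hence $A$ is a symmetric positive definite nonsingular M-matrix.

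\textbf{Schur complement identity.} Rather than manipulating $K$ directly, partition $A$ into blocks indexed by $U$ and $S$. The block-inverse formula gives
\[
K_{UU}=(A_{UU}-A_{US}A_{SS}^{-1}A_{SU})^{-1},\qquad K_{US}=-A_{UU}^{-1}A_{US}\,K_{SS}.
\]
Therefore
\[
C(K)=K_{US}K_{SS}^{-1}=-A_{UU}^{-1}A_{US}.
\]
This is the standard fact that Gaussian conditional means under a precision matrix $A$ are $-A_{UU}^{-1}A_{US}z_S$. I would verify it quickly from $AK=I$: the $(U,S)$ block reads $A_{UU}K_{US}+A_{US}K_{SS}=0$. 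Invertibility of $A_{UU}$ and $K_{SS}$ follows from positive definiteness, since principal submatrices of positive definite matrices are positive definite.

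\textbf{Sign of each factor.} First, $-A_{US}$ is entrywise nonnegative because it equals $\lambda W_{US}$. Second, $A_{UU}$ is a principal submatrix of a strictly diagonally dominant Z-matrix, so it is itself a strictly diagonally dominant Z-matrix with positive diagonal, i.e.\ a nonsingular M-matrix. Its inverse is therefore entrywise nonnegative. To keep this self-contained, write $A_{UU}=D-N$ with $D$ the positive diagonal and $N\ge0$. Strict diagonal dominance gives $\|D^{-1}N\|_\infty<1$, so the Neumann series
\[
A_{UU}^{-1}=\sum_{k\ge0}(D^{-1}N)^kD^{-1}
\]
converges and has nonnegative terms. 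The product of the two nonnegative matrices, $C(K)=A_{UU}^{-1}(\lambda W_{US})$, is then nonnegative entrywise.

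\textbf{Second claim.} For $S=\{j\}$, the update for target $i$ is scalar with coefficient $c=c_{ij}>0$. Corollary~\ref{cor:scalar} gives $\Delta_i(c)=2c\Sig_{ij}-Vc^2$, with $V=\Sig_{jj}+\sigma^2>0$. Both terms are strictly negative when $\Sig_{ij}<0$, so $\Delta_i(c)<0$. This argument does not use $\lambda$ beyond $\lambda>0$, so the conclusion holds for every $\lambda>0$. Hence, whenever the $K$-based update is nonzero on the pair $(i,j)$, negative true covariance makes it harmful.

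The main obstacle is the nonnegativity of $A_{UU}^{-1}$. The Neumann-series argument handles it cleanly once strict diagonal dominance of the principal submatrix is noted.
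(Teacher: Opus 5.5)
Your proposal is correct and follows the paper's proof. Both arguments rest on the identity $C(K)=K_{US}K_{SS}^{-1}=-A_{UU}^{-1}A_{US}$ for the $M$-matrix $A=I+\lambda L$, use $A_{UU}^{-1}\ge0$ and $A_{US}\le0$ entrywise, and then apply Corollary~\ref{cor:scalar} for the single-reveal sign claim. The differences are minor: you derive the block identity from the $(U,S)$ block of $AK=I$ rather than by comparing Gaussian conditional means, and you check $A_{UU}^{-1}\ge0$ with a Neumann series rather than citing the fact that principal submatrices of nonsingular $M$-matrices are again nonsingular $M$-matrices.
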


For a single reveal, a zero coefficient leaves the prediction
unchanged and gives zero gain. With several reveals, the gain
depends on all the coefficients and the relevant covariance
blocks. One negative cross-covariance alone does not determine
whether the combined update helps or hurts.

\paragraph{Why the coefficients are nonnegative.}
Nonnegative entries in $K$ alone are not enough to establish
Proposition~\ref{prop:rigid}. A positive-definite covariance
matrix with nonnegative entries can produce negative
coefficients after multiplication by $K_{SS}^{-1}$.
The result follows from the particular structure of the
Laplacian precision matrix.

Write $A=K^{-1}=I+\lambda L$. This matrix has nonpositive
off-diagonal entries and is strictly diagonally dominant
with positive diagonal entries. It is therefore a nonsingular
$M$-matrix. Its principal submatrix $A_{UU}$ has an
entrywise nonnegative inverse. The block identity
\[
   C(K)=K_{US}K_{SS}^{-1}
       =-\bigl(A_{UU}\bigr)^{-1}A_{US}
\]
then gives the result, because $A_{US}$ is entrywise
nonpositive.

The result also holds when the update includes a positive
nugget $\tau>0$. Let $E_S$ be the matrix whose columns are
the coordinate vectors for the nodes in $S$, so that
$E_S^\top r=r_S$. The update matrix for all nodes is
\[
   K E_S(K_{SS}+\tau I)^{-1}
   =
   \bigl(A+\tau^{-1}E_SE_S^\top\bigr)^{-1}
   \tau^{-1}E_S.
\]
Adding the nonnegative diagonal matrix
$\tau^{-1}E_SE_S^\top$ preserves the nonsingular
$M$-matrix property. The inverse and $E_S$ are both
entrywise nonnegative, so the update matrix is also
entrywise nonnegative. Its rows indexed by $U$ give
the coefficients for the unrevealed nodes.
This argument applies to the Laplacian covariance model.
It does not automatically apply to the other covariance
families considered below.

\paragraph{Covariance patterns excluded by the model.}
Since $A$ is a nonsingular $M$-matrix, its inverse $K$ is
also entrywise nonnegative. The model therefore cannot
represent negative residual covariance or covariance that
alternates in sign with graph distance. More generally,
neither pattern can be represented by an entrywise
nonnegative covariance kernel. Section~\ref{sec:exp}
examines evidence for these patterns in the graph benchmarks.

This restriction concerns signs, not monotonicity.
On an irregular graph, the entries of
$(I+\lambda L)^{-1}$ need not decrease monotonically with
graph distance. The argument does not require them to.
\section{Bayesian Covariance Inference and Propagation}
\label{sec:method}

\subsection{The Bayes-optimal linear propagation strength}

Let $\D$ denote data from previously labelled queries.
We use these data to choose a propagation coefficient for a
new query. Under expected squared-error loss, this decision
depends on the posterior mean covariance. Posterior dispersion
alone provides no additional reason to reduce propagation.

\begin{remark}[Expected gain depends on the posterior mean]
\label{rem:linear}
For a fixed coefficient $c$, the single-reveal gain satisfies
\[
   \E[\Delta_i(c)\mid\D]
   =2c\,\E[\Sig_{ia}\mid\D]
    -c^2\bigl(\E[\Sig_{aa}\mid\D]+\sigma^2\bigr).
\]
The same property holds for a fixed matrix $C$ in
\eqref{eq:delta}. Holding the posterior mean covariance
fixed, changing posterior dispersion does not change the
expected gain. Uncertainty affects this decision through
the posterior mean, without a separate dispersion penalty.
\end{remark}

Maximising this expected gain gives the following rule.

\begin{proposition}[Bayes-optimal linear propagation]
\label{prop:cb}
Conditional on $\D$ and $\Sig$, suppose the new residuals
have mean zero and covariance $\Sig$, with the independent
observation noise of Theorem~\ref{thm:delta}.
Assume finite posterior mean covariance and a reveal set
fixed before observing the new residuals.

For a single reveal, write
$\bar V=\E[\Sig_{aa}\mid\D]+\sigma^2>0$.
The coefficient maximising posterior expected gain is
\begin{equation}
   c^B_{ia}
   =\frac{\E[\Sig_{ia}\mid\D]}
          {\E[\Sig_{aa}\mid\D]+\sigma^2},
   \label{eq:cb}
\end{equation}
and, for every coefficient $c$ chosen before observing $z_a$,
\[
   \E[\Delta_i(c^B_{ia})-\Delta_i(c)\mid\D]
   =\bar V(c-c^B_{ia})^2.
\]
For multiple reveals, let
$\bar V_S=\E[\Sig_{SS}\mid\D]+\sigma^2I\succ0$.
Then
\[
   C^B=\E[\Sig_{US}\mid\D]\,\bar V_S^{-1},
\]
with
\[
   \E[\Delta(C^B;S)-\Delta(C;S)\mid\D]
   =\bigl\|(C-C^B)\bar V_S^{1/2}\bigr\|_F^2
\]
for every coefficient matrix $C$ chosen before observing $z_S$.
\end{proposition}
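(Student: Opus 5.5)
Conditional on $\Sig$ and $\D$, the setting is exactly that of Theorem~\ref{thm:delta}. The plan is therefore to apply the theorem conditionally, average over the posterior of $\Sig$ using linearity of expectation, and then complete the square a second time with the posterior mean covariance in place of $\Sig$. The matrix $C$ is chosen before $z_S$ is observed, so it may depend on $\D$ but not on the new residuals. Given $\D$ it is therefore a fixed matrix, and the theorem applies to it for every realisation of $\Sig$. The same holds for $S$, which is fixed before the new residuals are observed.

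First, conditional on $(\D,\Sig)$, identity~\eqref{eq:delta} gives
$\Delta(C;S)=2\operatorname{tr}(C\Sig_{SU})-\operatorname{tr}(CV_SC^\top)$, where $V_S=\Sig_{SS}+\sigma^2I$. Here $\Delta$ is understood as the conditional expected reduction given $\Sig$. The expression is affine in the entries of $\Sig$ for fixed $C$. Taking $\E[\,\cdot\mid\D]$ and using the assumed finiteness of the posterior mean covariance, linearity of trace and expectation gives
\[
\E[\Delta(C;S)\mid\D]=2\operatorname{tr}\bigl(C\,\bar\Sigma_{SU}\bigr)-\operatorname{tr}\bigl(C\bar V_SC^\top\bigr),
\]
where $\bar\Sigma=\E[\Sig\mid\D]$ and $\bar V_S=\bar\Sigma_{SS}+\sigma^2I$. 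This is Remark~\ref{rem:linear} in matrix form. The expression has exactly the algebraic form of~\eqref{eq:delta}, with $\Sig$ replaced by $\bar\Sigma$.

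Second, complete the square using the same algebra as in Theorem~\ref{thm:delta}. Set $C^B=\bar\Sigma_{US}\bar V_S^{-1}$ and use the symmetry $\bar\Sigma_{SU}=\bar\Sigma_{US}^\top$, which holds because each $\Sig$ is symmetric. Expanding $\operatorname{tr}[(C-C^B)\bar V_S(C-C^B)^\top]$ gives
\[
\operatorname{tr}(C\bar V_SC^\top)-2\operatorname{tr}(C\bar\Sigma_{SU})+\operatorname{tr}(C^B\bar V_SC^{B\top}).
\]
Hence
\[
\E[\Delta(C;S)\mid\D]=\operatorname{tr}(C^B\bar V_S C^{B\top})-\bigl\|(C-C^B)\bar V_S^{1/2}\bigr\|_F^2 .
\]
Subtracting this at $C^B$ and at a general $C$ yields the stated regret identity. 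Because $\bar V_S\succ0$, the Frobenius term vanishes only when $C=C^B$, so $C^B$ is the unique maximiser. For $\bar V_S\succ0$ itself, note that $\bar\Sigma_{SS}$ is a posterior average of positive semidefinite matrices and is therefore positive semidefinite. When $\sigma^2>0$, adding $\sigma^2I$ gives positive definiteness. Otherwise the condition is the stated assumption.

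Third, the single-reveal case is the specialisation $S=\{a\}$, $U\ni i$, read one row at a time. The total gain is a sum of per-target gains, each depending only on its own coefficient. Equivalently, one repeats the scalar computation of Corollary~\ref{cor:scalar} with $\bar V$ and $\E[\Sig_{ia}\mid\D]$, which gives $c^B_{ia}$ and the regret $\bar V(c-c^B_{ia})^2$.

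The only delicate step is the first one. It must be justified that conditioning on $\D$ and then on $\Sig$ leaves $C$ fixed, so that the moment identities $\E[z_Sr_U^\top\mid\Sig,\D]=\Sig_{SU}$ and $\E[z_Sz_S^\top\mid\Sig,\D]=V_S$ used in the theorem hold. This is precisely where the assumptions that $C$ is chosen before $z_S$ and that $S$ is fixed before the new residuals are needed. The tower property then finishes the argument.
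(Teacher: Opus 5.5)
Your proposal is correct and follows the paper's own argument: the paper likewise notes that the gain is affine in $\Sig$ for fixed $C$, takes the posterior expectation to get the same form with $\Sig$ replaced by $\E[\Sig\mid\D]$, and reuses the completing-the-square algebra of Theorem~\ref{thm:delta} and Corollary~\ref{cor:scalar}. The paper treats the scalar case first and says the matrix case follows the same way, whereas you do the matrix case and then specialise; your explicit justification that $C$ is fixed given $\D$, and your remark on why $\bar V_S$ is positive semidefinite, are useful additions.
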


Both results follow by taking posterior expectations in
\eqref{eq:delta} and completing the square. For a single
reveal, the optimal coefficient is unique and equals zero
exactly when the posterior mean cross-covariance is zero.
Choosing not to propagate therefore requires no separate
threshold.

\begin{remark}[Scope of the optimality]
\label{rem:linscope}
The optimum is over rules linear in $z_S$, with coefficients
fixed before $z_S$ is observed. A full posterior predictive
calculation can also update model probabilities and parameter
distributions using $z_S$, producing a prediction that need
not be linear. Proposition~\ref{prop:cb} does not establish
optimality over that larger class. Updating family weights within a query changes them by at most $0.01$ over $B=20$ reveals
(supplement).
\end{remark}

\subsection{A posterior over covariance shapes}

We use the graph to construct four covariance families:
zero cross-covariance, positive dependence, negative
dependence, and a geometric distance profile.
Let $R_\lambda=(I+\lambda L)^{-1}$, with $L$ the
combinatorial Laplacian and $\lambda>0$. Define
$K^G(\lambda)=D_\lambda^{-1/2}R_\lambda D_\lambda^{-1/2}$,
where $D_\lambda$ is the diagonal matrix containing the
diagonal entries of $R_\lambda$. Thus, $K^G$ has unit
diagonal. With $a>0$ the common residual variance and
$H_h$ the matrix indicating pairs at graph distance $h$,
the families are
\[
\begin{array}{ll}
M_0: & \Sigma=aI,\\
M_+: & \Sigma=aI+b\bigl(K^G(\lambda)-I\bigr),\quad b>0,\\
M_-: & \Sigma=aI+b\bigl(K^G(\lambda)-I\bigr),\quad b<0,\\
M_H: & \Sigma=aI+\theta\displaystyle\sum_{h\ge1}
                     \rho^{h-1}H_h,\\
     & \qquad \theta\in\R,\quad \rho\in(-1,1).
\end{array}
\]
Every candidate has diagonal entries equal to $a$.
The comparison therefore concerns cross-node covariance
while holding marginal variance fixed.

$M_0$ gives no linear propagation between distinct nodes.
$M_+$ models the positive residual dependence assumed by
LP-GNN and the correction step of \citet{huang2021}.
$M_-$ reverses the off-diagonal sign.
For $M_H$, covariance at distance $h$ is
$\theta\rho^{h-1}$. When $\theta<0$ and $\rho>0$, it is
negative at every finite positive distance. When
$\theta\neq0$ and $\rho<0$, its sign alternates with distance.
Neither signed pattern can be represented by an entrywise
nonnegative covariance kernel. Each non-null family has
two parameters beyond $a$, one scale and one range.

\paragraph{Valid covariance matrices.}
The displayed forms need not be positive semidefinite
without parameter restrictions. In our experiments, a free
distance profile has negative minimum eigenvalues on two
graphs. An indefinite matrix cannot define the Gaussian
likelihood or predictive variances used for acquisition.
Writing $\eta_m$ for the parameters of family $m$, and taking $\epsilon=10^{-3}a$, we restrict the
parameter space to
\begin{equation}
   \Theta_m
   =\bigl\{\eta_m:
      \Sigma_m(\eta_m)\succeq\epsilon I\bigr\},
   \label{eq:psd}
\end{equation}
subject to the sign and range restrictions above, with
$0<\epsilon<a$. For fixed $\lambda$, the admissible interval
for $b$ is determined by the extreme eigenvalues of
$K^G(\lambda)-I$. For fixed $\rho$, the interval for $\theta$ is determined by the extreme eigenvalues of
$\sum_{h\ge1}\rho^{h-1}H_h$.
Posterior averaging preserves this covariance constraint.

\paragraph{Marginal likelihoods.}
For each non-null family, we use a $\N(0,(a/2)^2)$ prior
on its scale parameter, truncated and renormalised over
the admissible interval for each range value and family.
A narrow interval is therefore not penalised simply for
containing less of the untruncated prior.
We integrate the scale by quadrature and the range
parameter ($\lambda$ or $\rho$) over a grid.

The Gaussian likelihood is a working model for comparing
the second-moment structure of binary residuals, not a
claim that they are Gaussian. It treats residual vectors
from different target classes as independent, so the
likelihood is composite. Numerical integration does not
correct for this dependence. Section~\ref{sec:exp} examines
sensitivity to the likelihood specification.

Under a uniform model prior, the integrated working
likelihoods give weights $\Pf(M_m\mid\D)$, and
\begin{equation}
   \E[\Sig\mid\D]
   =\sum_m\Pf(M_m\mid\D)\,
      \E[\Sig\mid\D,M_m].
   \label{eq:bma}
\end{equation}
Within each family, the scale is averaged using the same
quadrature as the evidence. Range-grid points are weighted
by their integrated likelihoods times their prior
probabilities. Equation~\eqref{eq:bma} therefore accounts
for parameter uncertainty within families and uncertainty
across families. The common diagonal $a$ is the pooled variance of the cross-fitted training residuals, plugged in rather
than given a prior, which makes the method empirical Bayes in that one parameter. Binary residuals are heteroscedastic, with variance $m_i(1-m_i)$ under a calibrated mean, so a common
diagonal is an approximation that leaves the families distinguished by off-diagonal structure alone.
We average the covariance matrices before forming the
update in Proposition~\ref{prop:cb}, rather than averaging
model-specific updates.

\section{Experiments}
\label{sec:exp}

\paragraph{Three questions.}
We ask whether residual covariance exhibits patterns excluded
by positive graph smoothing (Q1), how reliably the posterior
distinguishes covariance families and how much that choice
matters for prediction (Q2), and whether the resulting decisions
improve prediction and ranking (Q3).

\paragraph{Data.}
We use eight graphs: the citation networks \textsc{cora} and
\textsc{citeseer} \citep{sen2008}, the WebKB/actor graphs
\textsc{texas}, \textsc{cornell}, \textsc{wisconsin} and
\textsc{film} \citep{pei2020}, and the Wikipedia article
networks \textsc{chameleon} and \textsc{squirrel}
\citep{rozemberczki2021}. Graph sizes range from $183$ to
$7600$ nodes, mean degrees from $2.74$ to $76.27$, and edge
homophily from $0.061$ to $0.810$. For the Wikipedia graphs,
we discretise the traffic target into five quantile classes.

The Wikipedia graphs have a known source of leakage across
splits: nodes can share both a neighbourhood and a target
\citep{platonov2023}. In our copies, this affects $36\%$ of
\textsc{squirrel} nodes and $46\%$ of \textsc{chameleon}
nodes. Their results should be interpreted with this
limitation in mind.

\paragraph{Protocol.}
Each target class defines a query. The initial predictor is
a cross-fitted logistic regression on node features, and
covariance estimation leaves out the target class.
Pools are connected neighbourhoods of up to $200$ nodes. Every covariance operation is therefore on a matrix of that size, at cost $O(200^3)$, and graph distances
are truncated at $h\le4$.
We use connected pools because uniform node sampling can
remove most graph connections. On \textsc{cora}, uniform
samples of this size have mean degree $0.29$, compared
with $3.90$ in the full graph.

Methods share the initial predictions, labelled set and
budget of $B=20$ additional reveals. Each method selects
nodes using its own covariance, except that \textsc{c\&sp}
uses the nodes selected by \textsc{gmrf}. Methods are
evaluated on their remaining unrevealed nodes, so the
evaluation sets can differ.

Acquisition uses an upper confidence bound (UCB), with
an exploration bonus based on
\[
   \sqrt{\operatorname{diag}\!\left(
   K-K_{:S}(K_{SS}+\tau I)^{-1}K_{S:}
   \right)},
\]
where $\tau=0.05$ is the regularisation nugget.
The covariance therefore affects both node selection
and prediction updates.

We report mean squared error on unrevealed nodes and
recall@$k$, where $k$ is the number of relevant items
in the pool. This count is unavailable at deployment,
so recall@$k$ evaluates ranking with an oracle-supplied
cutoff. Paired bootstrap intervals resample test pools
within each graph. On the three small WebKB graphs,
a pool covers most of the giant component. These intervals
describe within-graph variability, not replication across
independent graphs.

\paragraph{Baselines.}
We compare a feature kernel (\textsc{feat}), Laplacian
smoothing (\textsc{gmrf}), and residual propagation based
on \citet{huang2021}, with autoscale (\textsc{c\&s}) or
without it (\textsc{c\&sp}). The plain correction step
performs better in our setting with only $20$ reveals
per pool, where autoscale amplifies the corrections.
We therefore report \textsc{c\&sp} as the primary
residual-propagation baseline.

\subsection{Residual covariance differs across graph regimes}
\label{sec:rangepred}

\textbf{Q1: the fitted profiles support patterns excluded
by positive smoothing.}
On \textsc{texas}, \textsc{cornell}, \textsc{wisconsin}
and \textsc{chameleon}, the posterior probability of
negative covariance at distance one is at least $0.97$
conditional on $M_H$. Under the same conditioning,
the probability that $\rho<0$, giving alternating signs
with distance, is at least $0.93$.
An entrywise nonnegative covariance cannot represent
these patterns. Proposition~\ref{prop:rigid} also shows
that the standard Laplacian update has nonnegative
propagation coefficients.

In contrast, \textsc{cora} has residual covariance
$+0.0122$ on citation edges, compared with $+0.0000$
on matched non-edges. Edge covariance is positive
for all seven classes.

\paragraph{Posterior shape evidence.}
Table~\ref{tab:weights} reports model weights and
the number of near-disjoint training pools.
Only one such pool is available on each small WebKB
graph because a neighbourhood of the requested size
covers the giant component. Pool counts indicate
the limited replication behind these weights.

The posterior favours $M_+$ on \textsc{cora} and
\textsc{squirrel}, and $M_H$ on \textsc{citeseer},
the three WebKB graphs and \textsc{chameleon}.
On \textsc{film}, weights of $0.54$ on $M_+$ and
$0.33$ on $M_0$ indicate less decisive support.

\begin{table}[t]
\centering
\caption{Posterior over covariance shape.
$M_0$: zero cross-covariance.
$M_+$: positive Laplacian-based covariance.
$M_-$: negative Laplacian-based covariance.
$M_H$: signed geometric profile.
The last column counts near-disjoint training pools.}
\label{tab:weights}
\begin{tabular}{lrrrrr}
\toprule
graph & $M_0$ & $M_+$ & $M_-$ & $M_H$ & pools \\
\midrule
\textsc{cora}      & $0.00$ & $\bm{1.00}$ & $0.00$ & $0.00$ & $12$ \\
\textsc{citeseer}  & $0.00$ & $0.08$ & $0.00$ & $\bm{0.92}$ & $12$ \\
\textsc{texas}     & $0.06$ & $0.01$ & $0.03$ & $\bm{0.91}$ & $1$ \\
\textsc{cornell}   & $0.05$ & $0.00$ & $0.18$ & $\bm{0.78}$ & $1$ \\
\textsc{wisconsin} & $0.07$ & $0.01$ & $0.03$ & $\bm{0.89}$ & $1$ \\
\textsc{film}      & $0.33$ & $\bm{0.54}$ & $0.03$ & $0.11$ & $12$ \\
\textsc{chameleon} & $0.00$ & $0.10$ & $0.00$ & $\bm{0.90}$ & $12$ \\
\textsc{squirrel}  & $0.00$ & $\bm{1.00}$ & $0.00$ & $0.00$ & $12$ \\
\bottomrule
\end{tabular}
\end{table}

\subsection{Family identification and decision stability}
\label{sec:weights}

\textbf{Q2: family selection is sensitive to the likelihood,
while selection and averaging give similar predictions
under the original likelihood.}

\paragraph{Likelihood sensitivity.}
The likelihood treats residual vectors from different target classes as independent, making it a composite
likelihood \citep{ribatet2012}. Unadjusted composite likelihoods overcount dependent evidence and concentrate the posterior, which pushes
averaging toward the highest-posterior family. An adjustment would need an effective sample size we cannot
justify for overlapping pools. Recomputing the evidence
with one class per pool, cycling over three choices,
changes the selected family on six of eight graphs.
Only \textsc{cora} and \textsc{squirrel} retain the
same selection. This change reduces both the number
of likelihood terms and their dependence, so it does
not isolate the effect of dependence alone.
Under the reduced likelihoods the averaged covariance is worse on six of eight graphs, by $0.0008$ in
squared error on average. \textsc{cora} and \textsc{squirrel} hold $M_+$ under every class choice; the
graphs selecting $M_H$ scatter.

Under the original likelihood, selecting the
highest-posterior family (\textsc{map}) and averaging
over families (\textsc{bayes}) differ by at most
$0.0002$ in squared error and $0.0039$ in recall.
The largest recall difference occurs on \textsc{citeseer}.
Similar predictive performance does not imply similar
covariance estimates or propagation coefficients.

\paragraph{Recovery of the generating family.}
We simulate residuals on the \textsc{cora} graph under
each covariance family. Gaussian simulations assess
selection under a correctly specified model.
Thresholding a latent Gaussian produces binary data
for a misspecification check. In the binary case,
the generating-family label refers to the latent
covariance, not necessarily the observed covariance.

The recovery table in the supplement reports selection frequencies.
The selected family matches the generating label in
$73\%$ of Gaussian and $77\%$ of binary simulations.
$M_+$ is selected in every replicate generated from
that family. $M_-$ is recovered in every Gaussian
replicate but is selected as $M_0$ in half of the
binary replicates. When $M_0$ generates the data,
it is selected in $92\%$ and $83\%$ of replicates,
respectively.

In the Gaussian simulations, $M_H$ is always selected
as $M_+$. A geometric profile dominated by positive
covariance can resemble the positive Laplacian family.
These results show that the posterior weights do not
reliably distinguish all generating families.

\paragraph{Prediction cost of family selection.}
The supplement evaluates simulated propagation rules
using the oracle regret in Theorem~\ref{thm:delta},
normalised by the cost of not propagating.
Under Gaussian residuals, supplying the generating
family reduces normalised regret by $0.008$ relative
to averaging, while always using $M_+$ increases it
by $0.323$. Thus, in these simulations, restricting
the available covariance patterns matters more for
prediction than knowing the generating family.
The binary experiment uses a latent-scale oracle,
so its result is a discrepancy diagnostic rather
than a fraction of attainable predictive gain.

\subsection{Prediction and ranking performance}
\label{sec:results}

\textbf{Q3: recall improves over the primary
residual-propagation baseline on every graph.
Squared-error improvements are not uniform.}

\paragraph{Results against baselines.}
Table~\ref{tab:main} reports both metrics.
Compared with \textsc{c\&sp}, \textsc{bayes} has lower
squared error on five graphs and higher error on
\textsc{cora}, \textsc{citeseer} and \textsc{squirrel}.
It has lower squared error than \textsc{gmrf} on
all eight graphs.

\begin{table*}[!t]
\centering\scriptsize
\caption{Mean squared error on unrevealed items
(lower is better) and recall@$k$ (higher is better).
All methods use $B=20$, leave-class-out covariance
estimation and PSD-constrained covariances.
\textsc{c\&sp} is the plain correction step of
\citet{huang2021}. \textsc{prof} uses the $M_H$ family alone,
\textsc{map} uses the single highest-posterior family,
and \textsc{bayes} uses the model average.
Leading zeros are omitted.}
\label{tab:main}
\begin{tabular}{lrrrrrr@{\hskip 1.0em}rrrrrr}
\toprule
& \multicolumn{6}{c}{squared error}
& \multicolumn{6}{c}{recall@$k$}\\
\cmidrule(r){2-7}\cmidrule(l){8-13}
graph
& \textsc{feat} & \textsc{gmrf} & \textsc{c\&sp}
& \textsc{prof} & \textsc{map} & \textsc{bayes}
& \textsc{feat} & \textsc{gmrf} & \textsc{c\&sp}
& \textsc{prof} & \textsc{map} & \textsc{bayes}\\
\midrule
\textsc{cora}
& .0625 & .0487 & .0473 & .0480 & .0477 & .0477
& .7357 & .7805 & .7066 & .7495 & .7586 & .7586\\
\textsc{citeseer}
& .0784 & .0618 & .0574 & .0583 & .0583 & .0583
& .5892 & .5988 & .5501 & .5763 & .5763 & .5802\\
\textsc{texas}
& .0688 & .0570 & .0541 & .0536 & .0536 & .0536
& .8255 & .7977 & .7977 & .8255 & .8255 & .8255\\
\textsc{cornell}
& .0617 & .0571 & .0561 & .0529 & .0529 & .0530
& .7482 & .7423 & .7532 & .7959 & .7959 & .7959\\
\textsc{wisconsin}
& .0455 & .0358 & .0345 & .0315 & .0315 & .0315
& .8165 & .8096 & .7979 & .8705 & .8705 & .8705\\
\textsc{film}
& .2206 & .1400 & .1366 & .1364 & .1363 & .1363
& .3854 & .4393 & .3825 & .4470 & .4479 & .4475\\
\textsc{chameleon}
& .1030 & .0944 & .0809 & .0797 & .0797 & .0799
& .6554 & .6626 & .6323 & .6653 & .6653 & .6645\\
\textsc{squirrel}
& .2322 & .1965 & .1363 & .1402 & .1488 & .1488
& .4349 & .4503 & .3934 & .4545 & .4473 & .4473\\
\bottomrule
\end{tabular}
\end{table*}

Table~\ref{tab:ci} gives paired bootstrap intervals
against \textsc{c\&sp}. Four of the five squared-error
improvements have intervals excluding zero.
The exception is \textsc{film}, whose improvement
is in the point estimate only.
The losses on \textsc{citeseer} and \textsc{squirrel}
have intervals excluding zero. On \textsc{cora},
the lower endpoint rounds to zero, so we do not
claim a resolved difference.

Recall gains over \textsc{c\&sp} range from $0.028$
to $0.073$, with all eight intervals excluding zero.
This advantage does not extend to every method:
\textsc{gmrf} has higher recall on both citation graphs.
The squared-error identity does not provide a
ranking guarantee.

\begin{table*}[!t]
\centering\footnotesize
\caption{Paired bootstrap $95\%$ intervals for
\textsc{bayes} minus the primary residual-propagation
baseline, \textsc{c\&sp}, resampling test pools within
each graph. Negative differences favour \textsc{bayes}
for squared error, and positive differences favour
\textsc{bayes} for recall.}
\label{tab:ci}
\begin{tabular}{lcc}
\toprule
graph & squared error & recall@$k$\\
\midrule
\textsc{cora}
& $+0.0004\,[+0.0000,+0.0008]$
& $+0.0520\,[+0.0338,+0.0718]$\\
\textsc{citeseer}
& $+0.0009\,[+0.0001,+0.0016]$
& $+0.0301\,[+0.0082,+0.0522]$\\
\textsc{texas}
& $-0.0005\,[-0.0009,-0.0001]$
& $+0.0278\,[+0.0189,+0.0378]$\\
\textsc{cornell}
& $-0.0031\,[-0.0041,-0.0023]$
& $+0.0427\,[+0.0349,+0.0505]$\\
\textsc{wisconsin}
& $-0.0030\,[-0.0037,-0.0024]$
& $+0.0727\,[+0.0577,+0.0891]$\\
\textsc{film}
& $-0.0002\,[-0.0007,+0.0002]$
& $+0.0650\,[+0.0576,+0.0727]$\\
\textsc{chameleon}
& $-0.0010\,[-0.0016,-0.0005]$
& $+0.0323\,[+0.0218,+0.0436]$\\
\textsc{squirrel}
& $+0.0125\,[+0.0089,+0.0163]$
& $+0.0539\,[+0.0439,+0.0643]$\\
\bottomrule
\end{tabular}
\end{table*}

\paragraph{Performance across graphs.}
To assess performance across datasets, we measure
each method's recall shortfall from the best method
on each graph. \textsc{bayes} has the smallest maximum recall shortfall, $0.0219$, followed by \textsc{map} at
$0.0225$.
This does not mean either method leads on every graph.
For squared error, the maximum excess over the
best method is $0.0125$ for \textsc{bayes}, compared
with $0.0032$ for \textsc{c\&sp}.

\paragraph{The failure on \textsc{squirrel}.}
The largest squared-error loss occurs on
\textsc{squirrel}, where \textsc{bayes} exceeds
\textsc{c\&sp} by $0.0125$, with paired interval
$[0.0089,0.0163]$.
The posterior assigns essentially all weight to $M_+$.
Observed edge covariance is positive, at $0.00060$
compared with $0.00005$ off edges, but that sign alone
does not establish that $M_+$ is the best predictive
covariance model. \textsc{prof}, the $M_H$-only method, has lower test squared error, $0.1402$ versus $0.1488$.
Here, marginal likelihood and test squared error
favour different families.

Family choice can also help. On \textsc{cora},
selecting $M_+$ raises recall from $0.7495$ for
$M_H$ alone to $0.7586$.
The matching performance of \textsc{map} and
\textsc{bayes} shows that this gain comes from
family choice, with little additional benefit
from averaging in these experiments.

\section{Negative Results and Ablations}
\label{sec:fails}

We examined two alternatives.

Updating covariance-family weights within a query changed
them by at most $0.01$ over $B=20$ reveals. In these
experiments, the new observations therefore had little
effect on the family weights, as noted in
Remark~\ref{rem:linscope}.

Finally, evaluating candidate updates with
\eqref{eq:delta} after substituting an estimated covariance
selects the update optimal for that estimate by construction.
This criterion selected the plug-in update on all eight
graphs. It does not independently assess the covariance
estimate, which motivates comparing covariance models
through their working likelihoods.

\section{Discussion}
\label{sec:disc}

A graph suggests how prediction errors may be related,
but does not establish their covariance. We use a posterior
over covariance families to determine how observed errors
should update predictions elsewhere. Under the stated
assumptions, the posterior mean covariance gives the
Bayes-optimal fixed linear update. For a single reveal,
the propagation coefficient is zero when the posterior
mean cross-covariance is zero, so no separate threshold
is needed to turn propagation off. Across eight graphs,
model averaging and selecting the highest-posterior family give similar results. Model averaging has the
smallest maximum recall shortfall among the methods evaluated. Squared error improves over
the residual-propagation baseline on five graphs, but
the largest loss exceeds the largest improvement.

Several limitations remain. The theory concerns squared
error and provides no recall guarantee. Maximum excess
squared error is $0.0125$ for our method, compared with
$0.0032$ for the residual-propagation baseline, with the
largest loss on \textsc{squirrel}. Family selection changes
on six graphs under the reduced likelihood, and predictive
performance under that likelihood has not been evaluated.
The simulations also show that distinct covariance
families can be difficult to distinguish. The theory
assumes fixed reveal sets and centred residuals, while
the experiments select adaptively and use a pooled bias
correction. Evaluation is limited to eight node-classification
benchmarks and four covariance families, with only one
near-disjoint training pool on each small WebKB graph.
Extending the analysis to adaptive selection and allowing
residual dependence to vary across a graph are natural
next steps.

\subsection*{AI use statement}

In this work, we used generative AI tools to implement and run the analysis scripts, to help revise
text, to check internal consistency between the manuscript and the numbers its scripts produce. We did not use
generative AI tools to generate the research ideas, and reviewing for other venues is not applicable to this
work. We have reviewed all AI-assisted work: every reported number
was checked against the script named for it, and each proposition was verified independently of the drafting
process. We take responsibility for the final content of this work, including text, claims, or artifacts
produced with the aid of generative AI.

\bibliographystyle{plainnat}
\bibliography{refs}

\clearpage
\setcounter{section}{0}
\setcounter{table}{0}
\setcounter{figure}{0}
\setcounter{equation}{0}
\setcounter{theorem}{0}
\setcounter{proposition}{0}
\setcounter{corollary}{0}
\setcounter{lemma}{0}
\renewcommand{\thesection}{S\arabic{section}}
\renewcommand{\thetable}{S\arabic{table}}
\renewcommand{\theequation}{S\arabic{equation}}
\renewcommand{\thetheorem}{S\arabic{theorem}}
\renewcommand{\theproposition}{S\arabic{proposition}}
\renewcommand{\thecorollary}{S\arabic{corollary}}
\renewcommand{\thelemma}{S\arabic{lemma}}
\section*{Supplementary Material}
\addcontentsline{toc}{section}{Supplementary Material}

\section{Proofs}

\subsection{The propagation identity (Theorem 1)}

\begin{theorem}[Propagation identity, multiple reveals]
Let $S$ be revealed and $U$ unrevealed, with $S$ and the coefficient matrix $C$ fixed before the
residuals are observed, and $z_S=r_S+\varepsilon$ with $\varepsilon\sim(0,\sigma^2I)$
independent of $r$, let $\E[r]=0$, and let $V_S=\Sig_{SS}+\sigma^2I\succ0$. For any
$C\in\R^{|U|\times|S|}$ and $\hat r_U=Cz_S$,
\[
   \Delta(C;S):=\E\|r_U\|^2-\E\|r_U-Cz_S\|^2
   =2\operatorname{tr}\!\bigl(C\Sig_{SU}\bigr)-\operatorname{tr}\!\bigl(CV_SC^\top\bigr),
\]
uniquely maximised at $C^\star=\Sig_{US}V_S^{-1}$, and for every $C$,
\[
   \Delta(C^\star;S)-\Delta(C;S)
   =\operatorname{tr}\!\bigl[(C-C^\star)V_S(C-C^\star)^\top\bigr]
   =\bigl\|(C-C^\star)V_S^{1/2}\bigr\|_F^2 .
\]
\end{theorem}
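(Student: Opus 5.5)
The plan is to expand the squared norm directly and reduce every expectation to second moments. Since $C$ is fixed (nonrandom), write
\[
\|r_U-Cz_S\|_2^2=\|r_U\|_2^2-2\,r_U^\top C z_S+z_S^\top C^\top C z_S .
\]
Then $\Delta(C;S)=2\E[r_U^\top Cz_S]-\E[z_S^\top C^\top Cz_S]$. Using the trace trick $x^\top My=\operatorname{tr}(Myx^\top)$ and linearity of expectation, each term becomes a trace against a second-moment matrix:
\[
\E[r_U^\top Cz_S]=\operatorname{tr}\bigl(C\,\E[z_Sr_U^\top]\bigr), \qquad
\E[z_S^\top C^\top Cz_S]=\operatorname{tr}\bigl(C\,\E[z_Sz_S^\top]\,C^\top\bigr).
\]
We assume finite second moments, which is implicit since $\Sig$ exists.

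Next we compute the two moment matrices. Since $z_S=r_S+\varepsilon$ with $\varepsilon$ independent of $r$ and $\E[\varepsilon]=0$, the cross terms vanish: $\E[\varepsilon r_U^\top]=\E[\varepsilon]\E[r_U]^\top=0$. Hence $\E[z_Sr_U^\top]=\E[r_Sr_U^\top]=\Sig_{SU}$, where $\E[r]=0$ lets us identify second moments with covariance blocks. Similarly,
\[
\E[z_Sz_S^\top]=\Sig_{SS}+\E[\varepsilon\varepsilon^\top]=\Sig_{SS}+\sigma^2I=V_S .
\]
Substituting these gives \eqref{eq:delta}. This is the only place the hypotheses (centring, independence, fixed $S$ and $C$) enter. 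The main point requiring care is exactly here, because the fixedness of $S$ and $C$ is what allows the expectation to pass inside the trace with unconditional covariance blocks.

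For the maximiser, we complete the square. Set $C^\star=\Sig_{US}V_S^{-1}$, so that $C^\star V_S=\Sig_{US}$. Since $V_S$ is symmetric and $\Sig_{SU}=\Sig_{US}^\top$, we have $\operatorname{tr}(C\Sig_{SU})=\operatorname{tr}(\Sig_{US}C^\top)=\operatorname{tr}(C^\star V_SC^\top)$. Expanding $\operatorname{tr}[(C-C^\star)V_S(C-C^\star)^\top]$ then gives
\[
\operatorname{tr}(CV_SC^\top)-2\operatorname{tr}(C^\star V_SC^\top)+\operatorname{tr}(C^\star V_SC^{\star\top}),
\]
so that
\[
\Delta(C;S)=\operatorname{tr}(C^\star V_SC^{\star\top})-\operatorname{tr}\bigl[(C-C^\star)V_S(C-C^\star)^\top\bigr].
\]
Evaluating at $C=C^\star$ shows that the first term equals $\Delta(C^\star;S)$, which yields \eqref{eq:quad}.

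The Frobenius form follows from $V_S=V_S^{1/2}V_S^{1/2}$ with a symmetric positive definite square root, since $\operatorname{tr}(MM^\top)=\|M\|_F^2$ for $M=(C-C^\star)V_S^{1/2}$. Uniqueness of the maximiser holds because $V_S\succ0$ makes $V_S^{1/2}$ invertible. The excess $\|(C-C^\star)V_S^{1/2}\|_F^2$ is therefore zero only if $(C-C^\star)V_S^{1/2}=0$, that is, only if $C=C^\star$.
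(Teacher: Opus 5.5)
Your proof is correct and follows the paper's argument essentially step for step: you expand the square, rewrite the cross and quadratic terms as traces against $\E[z_Sr_U^\top]=\Sig_{SU}$ and $\E[z_Sz_S^\top]=V_S$, and complete the square using $\Sig_{US}=C^\star V_S$. The one small difference is that you read uniqueness of the maximiser directly off the completed square via invertibility of $V_S^{1/2}$, whereas the paper argues it separately from strict concavity and the first-order condition $2\Sig_{US}-2CV_S=0$, so your version is slightly more economical.
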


\begin{proof}
Expanding the square,
$\E\|r_U-Cz_S\|^2=\E\|r_U\|^2-2\,\E[r_U^\top Cz_S]+\E\|Cz_S\|^2$, so
$\Delta(C;S)=2\,\E[r_U^\top Cz_S]-\E\|Cz_S\|^2$. For the cross term, $r_U^\top Cz_S$ is a scalar, so it
equals its own trace and
\[
   \E[r_U^\top Cz_S]=\E\operatorname{tr}(Cz_Sr_U^\top)=\operatorname{tr}\bigl(C\,\E[z_Sr_U^\top]\bigr)
   =\operatorname{tr}(C\Sig_{SU}),
\]
using $\E[z_Sr_U^\top]=\E[(r_S+\varepsilon)r_U^\top]=\Sig_{SU}$ because $\varepsilon\perp r$ and
$\E\varepsilon=0$. For the quadratic term,
$\E\|Cz_S\|^2=\E\operatorname{tr}(Cz_Sz_S^\top C^\top)=\operatorname{tr}(CV_SC^\top)$ with
$V_S=\E[z_Sz_S^\top]=\Sig_{SS}+\sigma^2I$. This gives the identity.

Since $V_S\succ0$, $C\mapsto-\operatorname{tr}(CV_SC^\top)$ is strictly concave
and the stationary point is the unique maximiser. Differentiating,
$\partial\Delta/\partial C=2{\Sig_{SU}}^{\!\top}-2CV_S=2\Sig_{US}-2CV_S$, which vanishes iff
$C=\Sig_{US}V_S^{-1}=C^\star$.

For the last claim, symmetry of $V_S$ gives ${\Sig_{SU}}={\Sig_{US}}^{\!\top}=(C^\star V_S)^\top=V_SC^{\star\top}$,
so $\operatorname{tr}(C\Sig_{SU})=\operatorname{tr}(CV_SC^{\star\top})$ and
\[
   \Delta(C;S)=2\operatorname{tr}(CV_SC^{\star\top})-\operatorname{tr}(CV_SC^\top),
   \qquad
   \Delta(C^\star;S)=\operatorname{tr}(C^\star V_SC^{\star\top}).
\]
Subtracting,
$\Delta(C^\star;S)-\Delta(C;S)
=\operatorname{tr}(C^\star V_SC^{\star\top})-2\operatorname{tr}(CV_SC^{\star\top})+\operatorname{tr}(CV_SC^\top)
=\operatorname{tr}[(C-C^\star)V_S(C-C^\star)^\top]$,
which is $\|(C-C^\star)V_S^{1/2}\|_F^2$ since $V_S\succ0$.
\end{proof}

\begin{remark}[Why centring is assumed]
The proof uses $\E[z_Sr_U^\top]=\Sig_{SU}$, which requires $\E[r]=0$. This is also what makes
$\Sig=\E[rr^\top]$ a covariance rather than a second moment. Cross-fitting does not deliver it, since
it prevents the mean from being fitted on the labels it is scored against but does not make it the
conditional mean. In the experiments the bias is estimated on training queries and applied as a
recalibration to held-out ones, which uses no labels from the query being scored.
\end{remark}

\begin{corollary}[Single reveal]
With $S=\{a\}$, $U=\{i\}$, $c=C_{ia}$ and $V=\Sig_{aa}+\sigma^2$, the identity reads
$\Delta_i(c)=2c\Sig_{ia}-Vc^2$, maximised at $c^\star=\Sig_{ia}/V$, with
$\Delta_i(c^\star)-\Delta_i(c)=V(c-c^\star)^2$. If $\Sig_{ia}=0$ but $c\neq0$ then
$\Delta_i(c)=-Vc^2<0$.
\end{corollary}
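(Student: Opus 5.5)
The corollary is the specialisation of the theorem just proved to $S=\{a\}$, $U=\{i\}$, so the plan is simply to substitute into the identity and its completed-square form. With $|S|=|U|=1$, the coefficient matrix $C$ is the $1\times1$ matrix $c=C_{ia}$. The matrix $V_S=\Sig_{SS}+\sigma^2I$ becomes the scalar $V=\Sig_{aa}+\sigma^2$. The cross block $\Sig_{SU}$ is the scalar $\Sig_{ai}=\Sig_{ia}$, by symmetry of the covariance. Every trace is then the trace of a scalar, so
\[
\Delta_i(c)=2\operatorname{tr}(c\,\Sig_{ai})-\operatorname{tr}(cVc)=2c\Sig_{ia}-Vc^2 .
\]
This is the first claim. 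The positivity hypothesis $V>0$ is exactly the condition $V_S\succ0$ required by the theorem.

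For the maximiser, I will read off $C^\star=\Sig_{US}V_S^{-1}$, which here is $c^\star=\Sig_{ia}/V$. Uniqueness follows from the theorem, or directly from strict concavity of $c\mapsto 2c\Sig_{ia}-Vc^2$ when $V>0$. The regret formula $\operatorname{tr}[(C-C^\star)V_S(C-C^\star)^\top]$ collapses to $V(c-c^\star)^2$. As a sanity check, I will complete the square directly:
\[
2c\Sig_{ia}-Vc^2=\frac{\Sig_{ia}^2}{V}-V\Bigl(c-\frac{\Sig_{ia}}{V}\Bigr)^2 .
\]

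For the final clause, I set $\Sig_{ia}=0$. The formula gives $\Delta_i(c)=-Vc^2$, and this is strictly negative whenever $c\neq0$, because $V>0$.

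There is no real obstacle. The only care needed is the bookkeeping that $\Sig_{SU}$ and $\Sig_{US}$ coincide in the scalar case, and that the standing assumptions of the theorem carry over to $S=\{a\}$: centred residuals, independent noise, and $S$ fixed in advance.
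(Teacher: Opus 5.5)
Your proposal is correct and follows the paper's approach: the paper states the corollary as a direct specialisation of Theorem~1 to $S=\{a\}$, $U=\{i\}$, reading off the identity, the maximiser $C^\star=\Sig_{US}V_S^{-1}=\Sig_{ia}/V$ and the quadratic regret $V(c-c^\star)^2$, and then setting $\Sig_{ia}=0$ with $V>0$ for the final clause. Your direct completion of the square is a harmless extra check that the paper does not include.
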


\subsection{Sign rigidity (Proposition 1)}

\begin{proposition}
Let $L$ be the combinatorial Laplacian of an undirected graph with nonnegative edge weights, let
$\lambda>0$, put
$K=(I+\lambda L)^{-1}$, and let $c_{ij}$ denote the entries of $C(K)=K_{US}K_{SS}^{-1}$. Then
$c_{ij}\ge0$ for all $i,j$.
\end{proposition}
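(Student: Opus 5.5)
The plan is to work with the precision matrix $A=K^{-1}=I+\lambda L$ instead of $K$ itself, following the sketch given after Proposition~\ref{prop:rigid} in the main text. The first step is to record the structure of $A$. Write $w_{ij}\ge0$ for the edge weights. Then $L_{ii}=\sum_{j\ne i}w_{ij}$ and $L_{ij}=-w_{ij}$ for $i\ne j$. Hence $A$ has off-diagonal entries $-\lambda w_{ij}\le0$ and diagonal entries $1+\lambda\sum_{j\ne i}w_{ij}$. Each row therefore exceeds the sum of the absolute values of its off-diagonal entries by exactly $1$, so $A$ is strictly diagonally dominant with positive diagonal. It is also symmetric, so it is a nonsingular $M$-matrix. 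The same dominance holds for every principal submatrix, in particular for $A_{UU}$, because deleting columns only removes nonpositive off-diagonal terms.

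The second step is the block identity $K_{US}K_{SS}^{-1}=-(A_{UU})^{-1}A_{US}$. I will derive it from $AK=I$ by reading off the $(U,S)$ block: $A_{UU}K_{US}+A_{US}K_{SS}=0$. Then $K_{US}=-A_{UU}^{-1}A_{US}K_{SS}$. Here $K_{SS}$ is invertible because it is a principal submatrix of the positive definite matrix $K$, and $A_{UU}$ is invertible by the first step. Right-multiplying by $K_{SS}^{-1}$ gives the identity. This is a Schur-complement computation and holds in either order of the blocks.

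The third step, which I expect to be the main point, is to show that $A_{UU}^{-1}\ge0$ entrywise. I would give a self-contained argument rather than only cite the $M$-matrix literature. Write $A_{UU}=D-N$, where $D$ is the positive diagonal part and $N\ge0$ holds the negated off-diagonal entries. Strict row dominance gives $\|D^{-1}N\|_\infty<1$. The Neumann series then converges:
\[
A_{UU}^{-1}=\Bigl(\sum_{k\ge0}(D^{-1}N)^k\Bigr)D^{-1},
\]
and every term is entrywise nonnegative. Combining this with $-A_{US}\ge0$, which holds because off-diagonal entries of $A$ are nonpositive, the product $C(K)=A_{UU}^{-1}(-A_{US})$ is entrywise nonnegative, so $c_{ij}\ge0$.

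Two degenerate cases should be handled. If $U$ or $S$ is empty the statement is vacuous. If edge weights vanish, the argument still holds since it uses only $w\ge0$. Strictness of the dominance comes from the identity term, which is why $\lambda>0$ together with the $+I$ suffices and no connectivity assumption is needed.
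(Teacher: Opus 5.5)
Your proposal is correct and follows the paper's proof. Both pass to the precision $A=I+\lambda L$, use $K_{US}K_{SS}^{-1}=-A_{UU}^{-1}A_{US}$, and conclude from $A_{UU}^{-1}\ge0$ and $A_{US}\le0$; you simply make the two cited steps self-contained, deriving the block identity from the $(U,S)$ block of $AK=I$ instead of matching Gaussian conditional means, and proving $A_{UU}^{-1}\ge0$ by a Neumann series under strict diagonal dominance instead of invoking the principal-submatrix property of nonsingular $M$-matrices.
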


\begin{proof}
Write $A=I+\lambda L$. Its off-diagonal entries are $-\lambda A^{\mathrm{adj}}_{ij}\le0$, its diagonal
entries are $1+\lambda d_i>0$, and $A_{ii}-\sum_{j\ne i}|A_{ij}|=1>0$, so $A$ is a nonsingular symmetric
$M$-matrix and $K=A^{-1}\ge0$ entrywise.

Non-negativity of $K$ is not by itself enough. A nonnegative positive-semidefinite kernel can induce coefficients of either sign. For
$K=\begin{pmatrix}1&0.5&0.1\\0.5&1&0.5\\0.1&0.5&1\end{pmatrix}$, which is positive definite and entrywise
positive, taking $S=\{1,2\}$ and $U=\{3\}$ gives $K_{US}K_{SS}^{-1}=(-0.2,\;0.6)$. What gives the
conclusion is the $M$-matrix structure of the precision. Partition $A$ by the unrevealed and revealed sets.
For a Gaussian with precision $A$ the conditional mean of $r_U$ given $r_S$ is $-A_{UU}^{-1}A_{US}r_S$, and
comparing it with $K_{US}K_{SS}^{-1}r_S$ gives
\[
   C(K)\;=\;K_{US}K_{SS}^{-1}\;=\;-A_{UU}^{-1}A_{US}.
\]
Principal submatrices of a nonsingular $M$-matrix are nonsingular $M$-matrices, so $A_{UU}^{-1}\ge0$
entrywise, while $A_{US}\le0$ entrywise. The product is therefore entrywise nonnegative.

The same holds with the soft-update nugget, where the coefficient is $K_{US}(K_{SS}+\sigma^2I)^{-1}$. This is
still the conditional mean under the working model with noisy observations: writing $Q=K^{-1}$ and letting
$E_S$ select the revealed coordinates, the update matrix is
$(Q+\sigma^{-2}E_SE_S^\top)^{-1}\sigma^{-2}E_S$, whose rows on $U$ coincide with
$K_{US}(K_{SS}+\sigma^2I)^{-1}$. Adding a nonnegative diagonal to $Q$ preserves the $M$-matrix property, so
this matrix is entrywise nonnegative as well. The argument is specific to the Laplacian kernel and does not
extend automatically to every family in the library.
\end{proof}

\begin{remark}
Only non-negativity is claimed. On irregular graphs the entries of $K$ need not decay monotonically in graph
distance, and the impossibility argument in the main text does not need them to. If $c_{ia}>0$ and $\Sig_{ia}<0$ then $\Delta_i<0$ for a single reveal by the Corollary. At $c_{ia}=0$ the
gain is zero.
\end{remark}

\subsection{Bayes-optimal linear propagation (Proposition 2)}

\begin{proposition}
Under the assumptions of Theorem~1 of the main paper, with a finite posterior mean covariance and a
reveal set fixed in advance, let $\bar V=\E[\Sig_{aa}\mid\D]+\sigma^2>0$. Then
$\E[\Delta_i(c)\mid\D]$ is maximised over $c\in\R$ at
$c^{B}=\E[\Sig_{ia}\mid\D]/\bar V$, and $\E[\Delta_i(c^{B})-\Delta_i(c)\mid\D]=\bar V(c-c^{B})^2$.
\end{proposition}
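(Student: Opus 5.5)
The plan is to condition on $\Sig$ first, apply the single-reveal identity (the Corollary), and then take posterior expectations given $\D$. Because $c$ is chosen before $z_a$ is observed, it is a deterministic function of $\D$ alone. Conditional on $\D$ and $\Sig$, the hypotheses of Theorem~1 hold: centred residuals with covariance $\Sig$, and independent noise with variance $\sigma^2$. The Corollary therefore gives the conditional gain
\[
\E[\Delta_i(c)\mid\D,\Sig]=2c\Sig_{ia}-c^2(\Sig_{aa}+\sigma^2).
\]
Here $\Delta_i(c)$ denotes the realised expected-loss reduction under the true covariance, and the outer expectation averages over the posterior for $\Sig$.

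Next I apply the tower property. The expression above is affine in the entries $\Sig_{ia}$ and $\Sig_{aa}$, with coefficients $2c$ and $-c^2$ that are fixed given $\D$. Taking $\E[\cdot\mid\D]$ and using linearity yields
\[
\E[\Delta_i(c)\mid\D]=2c\,\E[\Sig_{ia}\mid\D]-c^2\bar V .
\]
This step needs the posterior means of $\Sig_{ia}$ and $\Sig_{aa}$ to be finite, which is assumed. It is also why the requirement that $c$ be fixed before observing $z_a$ matters: otherwise $c$ could not be pulled out of the expectation.

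The remaining step is completing the square. Write $\E[\Sig_{ia}\mid\D]=\bar V c^B$, which is possible since $\bar V>0$. Then
\[
\E[\Delta_i(c)\mid\D]=2\bar V c c^B-\bar V c^2=\bar V (c^B)^2-\bar V(c-c^B)^2 .
\]
This expression is strictly concave in $c$, and its unique maximiser is $c^B$. Subtracting the value at a general $c$ from the value at $c^B$ gives exactly $\bar V(c-c^B)^2$, again by linearity of conditional expectation. This argument is the scalar case of the last step in the proof of Theorem~1, with $\Sig$ replaced by its posterior mean.

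I do not expect a real obstacle here. The only delicate point is bookkeeping: being precise that the expectation inside $\Delta_i$ is over the new residuals and noise given $\Sig$, and that integrability justifies interchanging the posterior expectation with the affine map. The multiple-reveal version in the main text would follow the same way, since $\E[\operatorname{tr}(C\Sig_{SU})\mid\D]=\operatorname{tr}(C\,\E[\Sig_{SU}\mid\D])$.
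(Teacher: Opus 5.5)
Your proof is correct and follows the paper's argument: both observe that $\Delta_i(c)$ is affine in $\Sig$ for fixed $c$, pass the posterior expectation through to obtain $2c\,\E[\Sig_{ia}\mid\D]-c^2\bar V$, and then complete the square. The only difference is presentation: the paper invokes the single-reveal Corollary with $\Sig$ replaced by its posterior mean, while you write out the square explicitly and spell out the conditioning on $\Sig$ and the $\D$-measurability of $c$.
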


\begin{proof}
$\Delta_i(c)=2c\Sig_{ia}-c^2(\Sig_{aa}+\sigma^2)$ is affine in $\Sig$ for fixed $c$, so
$\E[\Delta_i(c)\mid\D]=2c\,\E[\Sig_{ia}\mid\D]-c^2\bar V$. This is the scalar identity with $\Sig$
replaced by its posterior mean, and the Corollary applies verbatim. The matrix statement follows the
same way from the Theorem, with $\bar V_S=\E[\Sig_{SS}\mid\D]+\sigma^2I$.
\end{proof}

\begin{remark}[Scope]
The optimum is over rules linear in $z_S$ whose coefficient is fixed before $z_S$ is observed. Under
model uncertainty a fully Bayesian predictor may update $\Pf(M_m\mid\D,z_S)$ and induce an effective
coefficient depending on the realised $z_S$. The proposition makes no claim against that class.
\end{remark}

\subsection{Regret controls ranking}

This is the bound the main text points to. It inherits the setting of Theorem~1 of the main paper:
centred residuals, with the reveal set and the coefficient matrix fixed before the residuals are
observed. Where $\sigma_{ij}=0$ the Gaussian tail term is taken to be zero. Write $\mathcal R(C)=\Delta(C^\star;S)-\Delta(C;S)$ for the
propagation regret and $\mu_i(C)=m_i+(Cz_S)_i$ for the post-propagation score.

\begin{proposition}
With $D=C-C^\star$, $u_i=D_iV_S^{1/2}$, $p_{ij}=(D_i-D_j)z_S$, $g_{ij}=|\mu_i(C^\star)-\mu_j(C^\star)|$ and
$\sigma_{ij}^2=\|u_i-u_j\|^2$, we have $\sum_{i<j}\sigma_{ij}^2\le|U|\,\mathcal R(C)$, and for every constant
$t>0$,
\[
   \E\,\#\{\text{misordered}\}\;\le\;\E\,\#\{g_{ij}<t\}\;+\;\sum_{i<j}2\Phi(-t/\sigma_{ij})
\]
if $z_S$ is Gaussian, and without that assumption the same bound holds with the last term replaced by
$|U|\,\mathcal R(C)/t^2$.
\end{proposition}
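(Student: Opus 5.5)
We first prove the variance bound $\sum_{i<j}\sigma_{ij}^2\le|U|\,\mathcal R(C)$ and then the misordering bound. For the first, the regret identity \eqref{eq:quad} of Theorem~\ref{thm:delta} gives
\[
\mathcal R(C)=\|DV_S^{1/2}\|_F^2=\sum_{i\in U}\|u_i\|^2 .
\]
The standard identity
\[
\sum_{i<j}\|u_i-u_j\|^2=|U|\sum_i\|u_i\|^2-\Bigl\|\sum_i u_i\Bigr\|^2\le |U|\sum_i\|u_i\|^2
\]
then yields $\sum_{i<j}\sigma_{ij}^2\le|U|\,\mathcal R(C)$ directly.

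For the misordering bound, we fix a pair $(i,j)$. The pair is misordered by $C$ relative to the oracle ranking from $C^\star$ only if the score difference changes sign. We note that
\[
\mu_i(C)-\mu_j(C)=\bigl(\mu_i(C^\star)-\mu_j(C^\star)\bigr)+p_{ij}.
\]
So a misordering requires $|p_{ij}|\ge g_{ij}$. We split on the oracle margin. Either $g_{ij}<t$, or else $g_{ij}\ge t$ and then $|p_{ij}|\ge t$. Hence
\[
\mathbf 1\{\text{misordered}\}\le\mathbf 1\{g_{ij}<t\}+\mathbf 1\{|p_{ij}|\ge t\}.
\]
Summing over pairs and taking expectations reduces the problem to bounding $\Pf(|p_{ij}|\ge t)$. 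We would state a tie convention explicitly (count ties as misordered, or use $\le$), so that the boundary case is covered.

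The key moment computation is that $p_{ij}=(D_i-D_j)z_S$ has mean zero, since $\E[z_S]=0$ under centring and independent zero-mean noise. Its variance is
\[
(D_i-D_j)V_S(D_i-D_j)^\top=\|u_i-u_j\|^2=\sigma_{ij}^2 .
\]
This uses that $C$ and $S$ are fixed in advance, so $D$ is deterministic, exactly as in the proof of Theorem~\ref{thm:delta}.

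In the Gaussian case $p_{ij}\sim\N(0,\sigma_{ij}^2)$, so $\Pf(|p_{ij}|\ge t)=2\Phi(-t/\sigma_{ij})$. When $\sigma_{ij}=0$ we have $p_{ij}=0$ almost surely and the term is zero, matching the stated convention. Without Gaussianity, Chebyshev's inequality gives $\Pf(|p_{ij}|\ge t)\le\sigma_{ij}^2/t^2$. Summing over pairs and applying the first step gives $\sum_{i<j}\sigma_{ij}^2/t^2\le|U|\,\mathcal R(C)/t^2$.

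The main obstacle is bookkeeping rather than depth, in two places. First, the oracle margin $g_{ij}$ is random, since it depends on $z_S$ through $\mu(C^\star)$. This is why the first term is an expectation of a count, and why the split must be done pointwise before taking expectations rather than with a fixed margin. Second, the variance of $p_{ij}$ must be identified precisely as $\|u_i-u_j\|^2$, which needs $V_S=\E[z_Sz_S^\top]$. Otherwise the argument is a union-type decomposition followed by a Gaussian tail bound or Chebyshev's inequality.
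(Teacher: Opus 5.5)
Your proof is correct and follows the paper's argument step for step: the pair identity $\sum_{i<j}\|u_i-u_j\|^2=|U|\sum_i\|u_i\|^2-\|\sum_i u_i\|^2$ combined with the quadratic regret identity, the pointwise split $\mathbf 1\{\text{misordered}\}\le\mathbf 1\{g_{ij}<t\}+\mathbf 1\{|p_{ij}|\ge t\}$ for a constant $t$, and then the Gaussian tail or Chebyshev applied with $\E p_{ij}^2=\sigma_{ij}^2$. Your remarks on ties and on why $g_{ij}$, being random and dependent on $z_S$, must not be conditioned on match the paper's own caveats.
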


Write $D=C-C^\star$, $u_i=D_iV_S^{1/2}$ for the $i$-th row, and for a pair of unrevealed items
$p_{ij}=(D_i-D_j)z_S$, $g_{ij}=|\mu_i(C^\star)-\mu_j(C^\star)|$, $\sigma_{ij}^2=\|u_i-u_j\|^2$.

\emph{The pair identity.} $\sum_{i<j}\|u_i-u_j\|^2=\tfrac12\sum_{i,j}(\|u_i\|^2+\|u_j\|^2-2u_i^\top u_j)
=|U|\sum_i\|u_i\|^2-\|\sum_iu_i\|^2\le|U|\sum_i\|u_i\|^2=|U|\,\|DV_S^{1/2}\|_F^2=|U|\,\mathcal R(C)$,
the last equality being the quadratic regret identity of the main paper. Note $\E p_{ij}^2=(D_i-D_j)V_S(D_i-D_j)^\top=\sigma_{ij}^2$, which is
why $p$ must be defined on $z_S$: $V_S$ is the covariance of $z_S$, not of $r_S$.

\emph{The decomposition.} The propagated gap is $\mu_i(C)-\mu_j(C)=\Delta^\star_{ij}+p_{ij}$ with
$\Delta^\star_{ij}=\mu_i(C^\star)-\mu_j(C^\star)$. A misordering means
$\mathrm{sign}(\Delta^\star_{ij}+p_{ij})\neq\mathrm{sign}(\Delta^\star_{ij})$, which forces
$|p_{ij}|\ge|\Delta^\star_{ij}|=g_{ij}$. Hence for any constant $t>0$, \emph{pointwise in the draw},
\[
   \#\{\text{misordered}\}\;\le\;\#\{(i,j):g_{ij}<t\}\;+\;\#\{(i,j):|p_{ij}|\ge t\}.
\]

\emph{The tail.} Taking expectations, the second term is $\sum_{i<j}\Pf(|p_{ij}|\ge t)$. If $z_S$ is Gaussian
then $p_{ij}\sim\N(0,\sigma_{ij}^2)$ and this is $\sum_{i<j}2\Phi(-t/\sigma_{ij})$; without a distributional
assumption, Chebyshev gives $\sum_{i<j}\sigma_{ij}^2/t^2\le|U|\,\mathcal R/t^2$. The substitution is legitimate
only \emph{after} taking expectations and only because $t$ is a constant fixed in advance. Conditioning instead
on the realized gaps and writing $\sum2\Phi(-g_{ij}/\sigma_{ij})$ is invalid: $g_{ij}$ and $p_{ij}$ are
functions of the same draw of $z_S$, so the perturbation is not independent of the gap it must exceed, and that
expression comes out roughly twice the true count in simulation. \hfill$\square$

\begin{remark}
The proposition controls pairwise disagreement with the oracle \emph{linear score}. It is not a recall@$k$ guarantee against true labels. Such a guarantee would also require assumptions
linking the oracle scores to relevance, which we do not make.
\end{remark}

\section{Experimental detail}

\paragraph{Data.} \textsc{cora} ($2708$ nodes, $5278$ undirected edges) and \textsc{citeseer}
($3312$, $4536$) from the LINQS release. \textsc{texas} ($183$, $279$), \textsc{cornell} ($183$,
$277$), \textsc{wisconsin} ($251$, $450$) and \textsc{film} ($7600$, $26659$) from the Geom-GCN
release. \textsc{chameleon} ($2277$, $31371$) and \textsc{squirrel} ($5201$, $198353$) from the
Wikipedia article networks, with the traffic target discretised into five quantile classes; both contain
substantial numbers of nodes sharing a neighbourhood and a target, $46\%$ and $36\%$ respectively, as
discussed in the main text. Edges are symmetrised and self-loops removed.

\paragraph{Pools.} Each target class is a query. Pools are BFS neighbourhoods of up to $200$ nodes
seeded in the giant component. Uniform node samples are \emph{not} used. On \textsc{cora}, uniform samples of this size have mean degree
$0.29$, compared with $3.90$ in the full graph, so we use connected pools to retain enough graph structure
for the comparison. Up to $12$ training pools and $25$ test pools are drawn per graph. The overlap rule applies only to
training pools: test pools are drawn without it, so they may overlap one another and the training pools.

\paragraph{Mean and residuals.} The predictive mean is $\ell_2$-regularised logistic regression on node
features, cross-fitted over $5$ folds. Residuals from training queries are centred, and a new query's
mean is recalibrated by the pooled training bias, which uses no labels from that query. All covariance
estimates are leave-class-out.

\paragraph{What one independent likelihood unit is.} The model evidence factorises as
\[
   L_m(\theta;\D)=\prod_{q}\prod_{P}\ \N\bigl(r_{q,P};\,0,\,\Sigma_m(P;\theta)\bigr),
   \qquad p(\D\mid M_m)=\int L_m(\theta;\D)\,\pi_m(d\theta),
\]
over training queries $q$ and training pools $P$, so a single $(q,P)$ pair is one term and the
factorisation needs defending on both indices.

Overlapping pools can reuse observations in the product likelihood. We limit pairwise Jaccard overlap to
reduce this reuse, but the resulting pools are not independent. It was violated in an earlier
version of these experiments: a BFS ball of the requested size is the entire giant component on the
small graphs, so the twelve training pools had mean pairwise Jaccard overlap $1.000$ on \textsc{texas}
and \textsc{cornell} and $0.898$ on \textsc{wisconsin}, and identical data was entering the evidence
twelve times. Training pools are now accepted only when they overlap every previously accepted one by less than
$0.25$, which yields twelve pools on \textsc{cora}, \textsc{citeseer} and \textsc{film} and exactly one
on the three small graphs. These are near-disjoint rather than independent, and we describe them that
way throughout. The replicate count is reported beside every weight in
Table~1 of the main text, and the small-graph weights are correspondingly moderate. Test pools carry no
evidence and are drawn without the constraint.

Across queries, the residual vectors $r_{q,\cdot}$ for different target classes are not strictly
independent, since the class indicators sum to one and the leave-class-out residuals are therefore
dependent. Treating them as independent is an approximation, and we have not
established the direction of its effect on the posterior model weights. The sign of the raw residual
correlation does not by itself determine whether the product form over- or understates the evidence,
since that depends on the covariance of the likelihood scores. We note it as a limitation of the evidence calculation. We report the corresponding sensitivity check. Recomputing the evidence with a \emph{single} target class per pool, cycling over three
choices, changes the selected family on six of the eight graphs. The two exceptions are \textsf{cora} and \textsf{squirrel}, both of which put essentially all mass on $M_+$.
\textsf{film} also selects $M_+$, at $0.54$, and does switch, so retaining the selection is not simply a
property of preferring $M_+$. This check changes both the dependence among likelihood terms and the amount of evidence, so it does not
isolate either effect, and we did not evaluate predictive performance under the reduced likelihoods
(\texttt{scripts/bayes\_covariance2.py} with \texttt{--one-class} $\in\{0,1,2\}$).

\paragraph{Covariance parameterisation.} $K^{G}(\lambda)$ is used in correlation form, so
$aI+bK^{G}$ would carry diagonal $a+b$ while the distance-profile family carries $a$, its basis
$H_h$ starting at $h\ge1$. The families would then differ on the entry that dominates a Gaussian
likelihood. We therefore write the smoothing families as $aI+b(K^{G}(\lambda)-I)$, so every candidate
has diagonal exactly $a$ and $b$ describes off-diagonal structure alone. An earlier version of these
experiments did not, and the smoothing families carried up to $1.30$ times the null variance, which
materially changed both the selected shapes and the family-recovery experiment.

\paragraph{Model fitting.} Priors: $\N(0,(a/2)^2)$ on the scale parameter of each non-null family,
truncated to the admissible set $\{\theta:\Sigma_m(\theta)\succeq\epsilon I\}$ and renormalised there,
so that a range value whose admissible interval is narrow is not penalised merely for having more of
the untruncated prior outside it. Uniform on the range grids $\lambda\in\{0.3,1,3,10\}$ and $\rho\in\{-0.85,\dots,0.85\}$ ($19$ points), and uniform
over the four model families. Given the range parameter the remaining integral is one-dimensional and
is evaluated by trapezoid quadrature on $161$ points over that interval, with $\epsilon=10^{-3}a$. The same quadrature returns the
within-model posterior mean of the scale, and the range parameter is averaged over its grid weighted by
each value's marginal likelihood, so the covariance entering the Bayes coefficient is a posterior mean in the shape parameters throughout. The residual variance $a$ is a plug-in estimate from cross-fitted residuals on the training queries, using
no labels from the evaluation query, which makes the construction empirical Bayes in that one nuisance
parameter. Acquisition is UCB with
$\beta=0.7$, $B=20$ reveals, the regularisation nugget $\tau=0.05$.

\paragraph{Baselines.} \textsc{c\&s} follows the correction step of Huang et al.\ with $\alpha=0.8$ and
$50$ diffusion steps on the symmetrically normalised adjacency. We report it both with and without the
autoscale step: autoscale is designed for a transductive regime with many labels and amplifies badly
when only $20$ of $200$ items are revealed (\textsc{cora} MSE $0.1130$), so the plain correction step
is the stronger variant and is the one used in the main comparison.

\paragraph{Reveals, updates and intervals.} The budget $B=20$ is the \emph{total} number of reveals in a
pool. Acquisition begins from an empty revealed set, so no pool node is labelled in advance, and the
predictive mean is cross-fitted outside the pool and therefore uses no pool labels. The update is
$m+K_{:S}(K_{SS}+\tau I)^{-1}(y_S-m_S)$, applied without truncation to $[0,1]$; squared error and recall
are computed on that unmodified linear update, which is the object the identity evaluates. \textsc{feat}
is the linear feature Gram matrix in correlation form with a $10^{-6}$ ridge, and \textsc{gmrf} is
$K^{G}(\lambda)$ at $\lambda=1$, fixed rather than tuned. The admissible scale interval is intersected
across training pools, so one interval is used that holds $\Sigma\succeq\epsilon I$ on every training pool
rather than a separate interval per pool. Reported intervals are paired bootstrap percentile intervals at
the $2.5$ and $97.5$ per cent points, with $4000$ replicates resampling test pools within each graph.
Fitted without the positive-semidefinite constraint, the free distance profile has minimum eigenvalue
$-0.0085$ on \textsc{cora} and $-0.0143$ on \textsc{cornell}, which is why the constraint is imposed.

\paragraph{Family-recovery experiment.} Residuals are generated on the real \textsc{cora} graph under each known
shape, at $6$ pools $\times\,6$ queries with $12$ replicates. Binary residuals are obtained by
thresholding a latent Gaussian at base rate $0.15$ and centring, so that the induced residual covariance preserves the sign and ordering of the target but generally
changes its scale and its covariance-family form.

\begin{table}[!ht]
\centering
\caption{Shape recovery. Rows indicate the generating
family and columns the selected family. Under binary
residuals, the row label names the latent Gaussian
construction, so the observed binary covariance need
not lie in that family. The binary columns therefore
measure robustness to that construction rather than
literal recovery. There are $12$ replicates, with
$6$ pools $\times\,6$ queries, at the scale of the
real experiments.}
\label{tab:calib}
\begin{tabular}{lrrrr@{\hskip 1.4em}rrrr}
\toprule
& \multicolumn{4}{c}{Gaussian residuals}
& \multicolumn{4}{c}{Binary residuals}\\
\cmidrule(r){2-5}\cmidrule(l){6-9}
generating family
& $M_0$ & $M_+$ & $M_-$ & $M_H$
& $M_0$ & $M_+$ & $M_-$ & $M_H$\\
\midrule
$M_0$ & $\bm{.92}$ & $.08$ & $.00$ & $.00$
      & $\bm{.83}$ & $.00$ & $.08$ & $.08$\\
$M_+$ & $.00$ & $\bm{1.00}$ & $.00$ & $.00$
      & $.00$ & $\bm{1.00}$ & $.00$ & $.00$\\
$M_-$ & $.00$ & $.00$ & $\bm{1.00}$ & $.00$
      & $.50$ & $.00$ & $\bm{.50}$ & $.00$\\
$M_H$ & $.00$ & $1.00$ & $.00$ & $\bm{.00}$
      & $.00$ & $.25$ & $.00$ & $\bm{.75}$\\
\bottomrule
\end{tabular}
\end{table}

\section{Alternatives examined}

Two alternatives were examined and are not used in the reported method.

\emph{Sequential within-query updating} changes the family weights by at most $0.01$ over $B=20$ reveals.
This is the quantity the scope remark of the main paper appeals to.

\emph{Ranking candidate kernels by the identity} is circular. Evaluating $\Delta$ at an estimated $\Sig$ is
maximised by construction at $K=\hat\Sig$, so it selects the plug-in on all eight graphs. This is why the
method compares marginal likelihoods instead.

\section{Effect of covariance-family choice on propagation risk}\label{sec:action}
Table~\ref{tab:calib} evaluates recovery of the generating family, a task the method is never asked to
perform. We also evaluate the resulting propagation coefficients. The procedure chooses a coefficient
$c^{B}$, and Theorem~1 of the main paper gives the excess squared error of any $C$ relative to the
true-covariance oracle as
$\|(C-C^\star)V^{1/2}\|_F^2$ with $V=\Sigma^\star_{SS}+\sigma^2I$ and $C^\star=\Sigma^\star_{US}V^{-1}$.
In simulation $\Sigma^\star$ is known, so every rule is evaluated on that scale. Under the binary law
$\Sigma^\star$ is the covariance of the \emph{latent} Gaussian rather than of the thresholded residuals the
estimators see, and the residuals are additionally rescaled to unit mean square while $\sigma^2$ is held
fixed. The binary column therefore compares rules estimating a binary covariance against an oracle defined
on the latent scale, and its absolute levels should not be read as fractions of an attainable gain. The
Gaussian column is unaffected. Under Gaussian residuals we report regret as a fraction of $\mathrm{regret}(0)$, the cost of refusing to
propagate, so $0$ is optimal and $1$ discards the whole available gain. The binary column is not a fraction
of an attainable gain and is reported only as a discrepancy from the latent-scale oracle.

Under a true $M_0$ the optimum is $C^\star=0$, so $\mathrm{regret}(0)=0$ and the fraction is $0/0$;
that row is omitted.
On the three truths carrying dependence the \textsc{never} arm scores exactly $1.000$, which is an
arithmetic check on the normalisation.

\begin{table}[h]\centering\small
\caption{Regret of the propagation coefficient as a fraction of the cost of never propagating for the Gaussian column only; the binary column measures
discrepancy from a latent-scale oracle and is not a fraction of attainable predictive gain, on the
truths that carry dependence. Paired within replicate; $12$ replicates, $6$ pools, $5$ reveal splits,
$\sigma^2=1$ (the sweep over $\sigma^2\in\{0.1,0.5,1\}$ moves the third digit).
\texttt{scripts/cov\_action\_regret.py}.}\label{tab:action-supp}
\begin{tabular}{lcc}
\toprule
paired difference & gaussian & binary (latent oracle) \\ \midrule
value of knowing the family & $+0.008$ & $+0.060$ \\
 & {\footnotesize$[.005,.011]$} & {\footnotesize$[.031,.092]$}\\
cost of assuming a family & $+0.323$ & $+0.186$ \\
 & {\footnotesize$[.187,.481]$} & {\footnotesize$[.095,.290]$}\\
\bottomrule
\end{tabular}
\end{table}

Two differences answer the question, both paired within replicate on identical data: the value of
knowing the family, $\mathrm{regret}(\textsc{bma})-\mathrm{regret}(\textsc{oracle-fam})$, where the
oracle is handed the true family and still estimates its parameter; and the cost of assuming one,
$\mathrm{regret}(\textsc{always-}M_+)-\mathrm{regret}(\textsc{bma})$.

\paragraph{A comparison promoted after inspecting results.} We initially planned to compare regret on
misidentified replicates against regret on identified ones. Because identification success differed strongly
by generating family ($M_+$ recovered in every replicate, $M_H$ in none), that comparison confounded family
with identification. We therefore report the paired comparison above, originally a secondary analysis. This
choice was made after inspecting the results. Produced by \texttt{scripts/cov\_action\_regret.py}.

\end{document}